\let\oldincgraphics\includegraphics
\renewcommand{\includegraphics}[1]{\oldincgraphics{img/#1}}
\renewcommand{\fps@figure}{htbp}
\renewcommand{\fps@table}{htbp}
\newtheorem{thm}{Theorem}[section]
\newtheorem{prop}[thm]{Proposition}
\newcommand{\st}{\;\middle\vert\;}
\title{Minimal mutation-infinite quivers}
\author{John Lawson}
\date{\today}
\renewcommand{\@journal}{}
\renewcommand{\@footnotetwo}{}
\begin{document}
\maketitle
\begin{abstract}
	Quivers constructed from hyperbolic Coxeter simplices give examples of minimal
	mutation-infinite quivers, however they are not the only such quivers. We
	classify minimal mutation-infinite quivers through a number of moves and link
	the representatives of the classes with the hyperbolic Coxeter simplices, plus
	exceptional classes which are not related to simplices.
\end{abstract}

\tableofcontents
\section{Introduction}\label{sec:intro}
Mutations on quivers were introduced by Fomin and Zelevinsky in their
introduction to cluster algebras in 2002~\cite{FZ-CA1}. Since then this area has
been widely studied, with applications in numerous areas of mathematics.

The mutation class of a quiver is the collection of all quivers which can be
obtained from the original through a sequence of mutations. All finite sized
mutation classes were classified by Felikson, Shapiro and
Tumarkin in~\cite{FST-FiniteMutation}, this classification necessarily contains
the mutation-classes of quivers that give finite-type cluster algebras, which
were classified by Fomin and Zelevinsky in~\cite{FZ-CA2}. This classification
states that all finite-type cluster algebras come from quivers given by
orientations of Dynkin diagrams, while quivers from orientations of affine Dynkin
diagrams are also mutation-finite.

In their classification Fomin and Zelevinsky introduced mutations on diagrams,
and Seven classified all minimal 2-infinite diagrams
in~\cite{Seven-MinInfinite}. The work by Seven on minimal 2-infinite diagrams
inspired the study of minimal mutation-infinite quivers and this paper builds on
work done by Felikson, Shapiro and Tumarkin in~\cite[Section
7]{FST-FiniteMutation} proving a number of useful results about minimal
mutation-infinite quivers.

Minimal mutation-infinite quivers are those which belong to an infinite mutation
class, but any subquiver belongs to a finite mutation
class.  Simply-laced diagrams from hyperbolic Coxeter simplices of finite volume
have the property that any subdiagram is a Dynkin or affine Dynkin diagram and
so any mutation-infinite orientation of such a diagram is minimal
mutation-infinite.  The motivating question behind this study is whether the
family of minimal mutation-infinite quivers from orientations of hyperbolic
Coxeter simplex diagrams contains all minimal mutation-infinite quivers.

In this paper we classify all minimal mutation-infinite quivers, with classes
represented by orientations of hyperbolic Coxeter simplex diagrams as well as
some exceptional representatives. The classification is defined in terms of
moves, which are specific sequences of mutations. In general, mutation does not
preserve the property of a quiver being minimal mutation-infinite, however the moves are
constructed in such a way that they do.

{%
\renewcommand{\thethm}{\ref{thm:smallcase}}%
\begin{thm}
	Any minimal mutation-infinite quiver with at most $9$ vertices can be
	transformed through sink-source mutations and at most 5 moves to one of an
	orientation of a hyperbolic Coxeter diagram, a double arrow quiver or an
	exceptional quiver.
\end{thm}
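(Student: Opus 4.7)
The proof will be computer-assisted and case-based. The plan is to enumerate every minimal mutation-infinite quiver on $n \leq 9$ vertices up to sink-source equivalence, to identify the target representatives in each size, and then to exhibit, for each enumerated quiver, a sequence of at most five moves reducing it to a target.

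For the enumeration step, I would build candidate quivers inductively on the number of vertices. Because a minimal mutation-infinite quiver has the property that every proper subquiver is mutation-finite, a candidate on $n$ vertices is obtained by adjoining a new vertex (with some arrows) to a quiver from one of the mutation-finite classes on $n-1$ vertices classified by Felikson--Shapiro--Tumarkin. A candidate passes the test if all of its $(n-1)$-vertex subquivers lie in a known finite mutation class and the whole quiver does not; both conditions are decidable from the explicit lists in~\cite{FST-FiniteMutation}. Running this test up to $n = 9$ gives a finite and explicit catalogue to work with. Separately, I would list the targets: the simply-laced orientations of the finite-volume hyperbolic Coxeter simplex diagrams (which exist only in dimensions $\leq 9$ and are tabulated in the hyperbolic-geometry literature), the double arrow quivers, and the finitely many exceptional quivers. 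By the remark in the introduction, any mutation-infinite orientation of a hyperbolic Coxeter simplex diagram is automatically minimal mutation-infinite, so these targets sit inside the catalogue.

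For the reduction step, I would run, for each enumerated quiver $Q$, a breadth-first search in which each layer consists of one move composed with an arbitrary sink-source mutation sequence applied to every representative of the current orbit. Since the moves defined earlier in the paper preserve the minimal mutation-infinite property, every intermediate quiver still lies in the catalogue, which keeps the search finite and allows quick pruning against previously visited orbits. Termination of the search at depth $\leq 5$ with a target hit, for every starting point $Q$, is exactly the content of the theorem.

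The main obstacle is twofold: ensuring that the enumeration is genuinely exhaustive (no minimal mutation-infinite quiver is missed and no candidate is wrongly discarded), and certifying the uniform bound of five moves rather than an \emph{a priori} unknown function of $n$. The first issue is addressed by cross-checking with an independent implementation of quiver mutation (for instance Keller's applet) and by verifying the sub-quiver test against the published finite-mutation classification. The second is genuinely case-specific: the bound must be read off from the longest reduction sequence produced by the breadth-first search, and there is no obvious structural reason it should be as small as five, so the computation itself is what establishes this bound.
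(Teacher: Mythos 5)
Your proposal matches the paper's own approach: the author likewise builds candidates by extending the mutation-finite quivers of size $n-1$ by one vertex (with $0$, $1$ or $2$ arrows), verifies minimality and mutation-infiniteness algorithmically, and then certifies the bound of $5$ moves by a shortest-path (breadth-first) search over move applications from each quiver to a representative, as described in Appendix~\ref{sec:compute}. The only differences are implementation details (the paper decides mutation-finiteness by computing the full mutation class or detecting a triple arrow rather than by table lookup), so this is essentially the same computer-assisted argument.
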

\addtocounter{thm}{-1}%
}

{%
\renewcommand{\thethm}{\ref{thm:main}}%
\begin{thm}
	Any minimal mutation-infinite quiver can be transformed through sink-source
	mutations and at most 10 moves to one of an orientation of a hyperbolic
	Coxeter diagram, a double arrow quiver or an exceptional quiver.
\end{thm}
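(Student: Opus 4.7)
The plan is to extend Theorem \ref{thm:smallcase} to arbitrary vertex count, using it both as a base case and as a template. Since a minimal mutation-infinite quiver has every proper subquiver mutation-finite, each subquiver is controlled by the Felikson--Shapiro--Tumarkin classification, which constrains the local combinatorics at every vertex.

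The first step is to determine which infinite families of minimal mutation-infinite quivers can occur on large numbers of vertices. Hyperbolic Coxeter simplex diagrams of finite volume exist only in low dimensions, so for sufficiently large $n$ the only representatives are double arrow quivers and a short explicit list of exceptional families. I would describe each such family as an extension of a representative appearing in Theorem \ref{thm:smallcase}: an $n$-vertex member is built from a smaller one by adjoining a combinatorially predictable sequence of vertices, in a way that preserves minimal mutation-infiniteness.

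The second step is, given an arbitrary minimal mutation-infinite $Q$ on $n \geq 10$ vertices, to locate such an extension inside $Q$ --- possibly after a bounded number of sink-source mutations --- and peel it off, reducing to a minimal mutation-infinite quiver on at most $9$ vertices to which Theorem \ref{thm:smallcase} applies directly. That theorem contributes $5$ of the permitted $10$ moves. The remaining budget of $5$ moves is used to realign $Q$ with the standard form of its extension before and after the peeling step, so that the transformation produced for the base quiver propagates correctly along the adjoined vertices.

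The main obstacle will be the uniformity of this reduction. A naive vertex-by-vertex simplification would produce a move count that grows with $n$, so the argument must treat the whole extension within a single bounded sequence of moves, independent of its length. Establishing this uniformity requires sharp structural information about how mutations permute the vertices of the extension, together with a case analysis by extension type (hyperbolic Coxeter, double arrow, exceptional). In each case one must verify that the sink-source mutations and moves applied to the small base quiver lift to the full quiver without accumulating extra work along the extension, which is where the bulk of the technical effort is concentrated.
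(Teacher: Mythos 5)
Your proposal rests on a false premise: that minimal mutation-infinite quivers occur on arbitrarily many vertices, organised into infinite families that must be located and ``peeled off'' down to the $9$-vertex case. In fact, by the result of Felikson, Shapiro and Tumarkin quoted in the paper as Theorem~\ref{thm:mmi_size}, every minimal mutation-infinite quiver has at most $10$ vertices; any larger mutation-infinite quiver already contains a proper mutation-infinite subquiver and so is not minimal. Consequently there are no infinite families, no extensions of unbounded length, and no uniformity problem to solve: the only case left open by Theorem~\ref{thm:smallcase} is the quivers on exactly $10$ vertices, and for those there is nothing to peel. Your claim that ``for sufficiently large $n$ the only representatives are double arrow quivers and a short explicit list of exceptional families'' is vacuous for the same reason --- the representative lists stop at rank $10$ because the quivers themselves do. A correct proof must begin from this boundedness result, which your proposal never invokes.

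The paper's actual argument is finite and computational. Since the problem is bounded at $10$ vertices, all minimal mutation-infinite quivers are enumerated by computer ($18{,}799$ of them, built by extending mutation-finite quivers of one size smaller, as described in Appendix~\ref{sec:compute}), and for each one the minimal number of moves to its class representative is computed directly. The reason the bound rises from $5$ to $10$ in passing from Theorem~\ref{thm:smallcase} to Theorem~\ref{thm:main} is not a doubling of budget across a reduction step: it is that the size-$10$ moves carry extra constraints (some require an auxiliary quiver built from the fixed components to be mutation-finite), so longer move sequences are needed before a representative is reached. If you wanted a non-computational proof, the work would lie in a structural treatment of the $10$-vertex case, not in the reduction machinery you describe.
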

\addtocounter{thm}{-1}%
}

The results of this paper give a procedure to check whether any given quiver is
mutation-infinite without having to compute any part of its mutation class. This
procedure follows from the fact that any mutation-infinite quiver must contain a
minimal mutation-infinite complete subquiver.

In Section~\ref{sec:mutations} of this paper we remind the reader of the process
of mutating quivers, and recall the properties arising from mutation-equivalence
of quivers. Using these definitions we introduce minimal mutation-infinite
quivers and highlight the interest behind their study.
In Section~\ref{sec:simplices} we recall the relations between quivers, diagrams
and Coxeter simplices, as well as constructing quivers from orientations of
certain Coxeter diagrams given by these simplices. Some examples of these
quivers give minimal mutation-infinite quivers.

Section~\ref{sec:class} introduces a classification of all minimal
mutation-infinite quivers through a number of elementary moves defined in
Section~\ref{sec:moves} and listed in Appendix~\ref{sec:move_list}. These moves
allow minimal mutation-infinite quivers to be transformed to other minimal
mutation-infinite quivers and so admit a classification of such quivers.

The quiver classification involved a large computational effort to find all
minimal mutation-infinite quivers. Appendix~\ref{sec:compute} details the
procedures used in this computation. Details about implementations of these
procedures and the complete lists of minimal mutation-infinite quivers can be
found on the author's website~\cite{Lawson-MMI}.

\begin{acknowledgements}
The author would like to thank John Parker and Pavel Tumarkin for their
supervision and support.
\end{acknowledgements}
\section{Mutations}\label{sec:mutations}

The following gives an introduction to mutations of quivers.  Further
information and the extension to cluster algebras can be found in introductory
survey articles such as~\cite{Williams-CA} or~\cite{Keller-Intro}.  Given a
graph denote a cycle of length one as a loop, a cycle of length two as a
2-cycle.

\begin{mdef}
	A \textbf{quiver} is an oriented graph with possibly more than one arrow
	between any two vertices. In the following a quiver is always considered with
	the additional restriction that it contains no loops or 2-cycles.
\end{mdef}

This restriction ensures that the quiver is uniquely determined by its
skew-symmetric adjacency (or exchange) matrix. This correspondence depends on an
indexing of the vertices of the quiver and it is convenient to always consider
the quiver with such a numbering, so that any vertex can be referred to by its
index.

\begin{mdef}\label{def:mutation}
	\textbf{Mutation} of a quiver is a function at a vertex $k$ of the quiver which
	changes the arrows around the vertex according to 3 rules:
	\begin{enumerate}
		\item Whenever there is a path through vertex $k$ of the form $i \to k \to
			j$ then add an arrow $i \to j$.
		\item Reverse the direction of all arrows adjacent to $k$
		\item Remove a maximal collection of 2-cycles created in this process.
	\end{enumerate}

	Figure~\ref{fig:mutex} shows an example of mutation on a quiver.
\end{mdef}
\begin{figure}
	\includegraphics{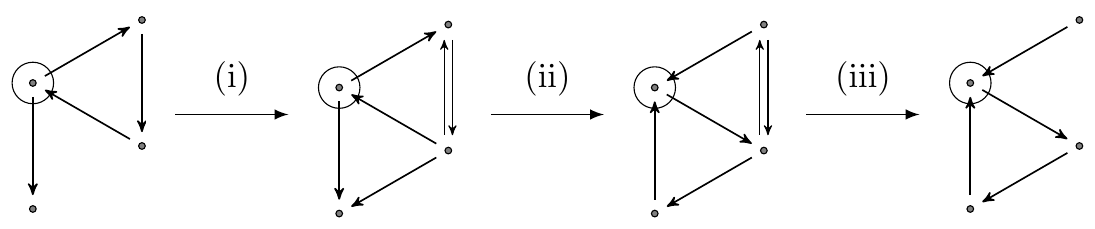}
	\caption{Example of mutation at the circled vertex. Each step in
	Definition~\ref{def:mutation} is shown separately.}
	\label{fig:mutex}
\end{figure}

\begin{mdef}
	Two quivers $P$ and $Q$ are \textbf{mutation-equivalent} if there exists a
	sequence of mutations taking $P$ to $Q$.
	The \textbf{mutation-class} of a quiver is the equivalence class under this
	equivalence relation.
	A quiver is \textbf{mutation-finite} if it belongs to a mutation-class of
	finite size, otherwise the quiver is \textbf{mutation-infinite}.
\end{mdef}

All mutation-finite quivers have been classified by Felikson, Shapiro and
Tumarkin in~\cite{FST-FiniteMutation} as either a quiver arising from an
orientation of a triangulation of a surface or a quiver in one of 11 exceptional
mutation-classes.

\subsection{Partial ordering on quivers}

A partial ordering can be put on all quivers given by inclusion of complete
subquivers.

\begin{figure}
	\includegraphics{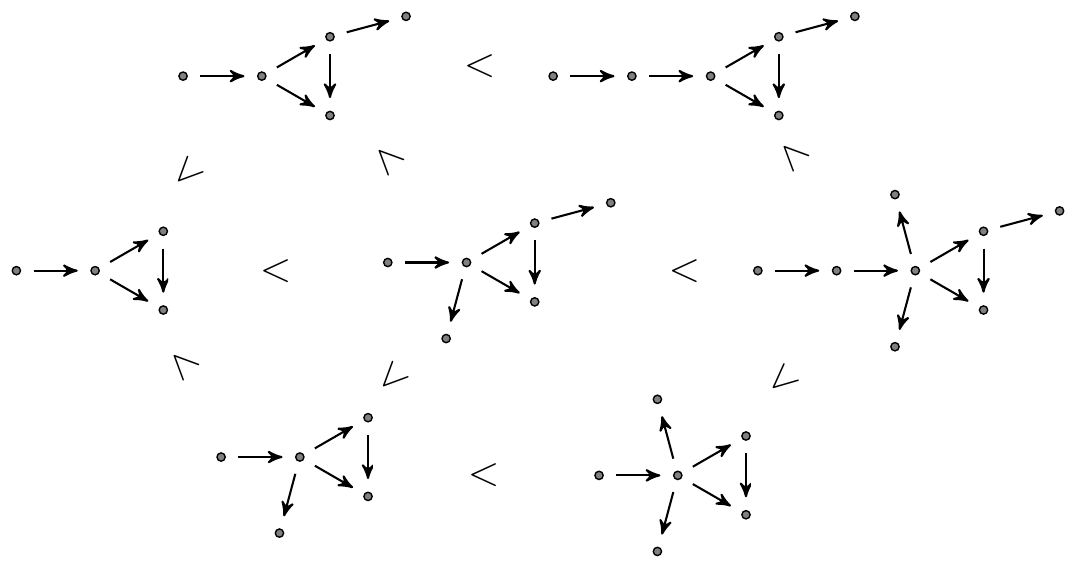}
	\caption{The partial ordering on some examples of quivers.}
	\label{fig:po}
\end{figure}

\begin{mdef}
	Given two quivers $P$ and $Q$, then $P < Q$ if $P$ can be obtained by removing
	vertices (and all arrows adjacent to each removed vertex) from $Q$.
	Equivalently, if $B_P$ and $B_Q$ are the exchange matrices of $P$ and $Q$
	respectively, then $P < Q$ if $B_P$ is a submatrix of $B_Q$ up to
	simultaneously permuting the rows and columns of $B_P$.
	If $P < Q$ then call $P$ a \textbf{complete subquiver} of $Q$.
\end{mdef}

For brevity it is convenient to omit the word complete and write subquiver to
mean complete subquiver.  Denote the vertices of $Q$ as $u_1,\dotsc,u_
m,v_1,\dotsc,v_n$ and let $P$ be the subquiver of $Q$ obtained by removing
vertices $v_1,\dotsc,v_n$.  Then any mutation at $u_i$ commutes with removing
these vertices $\lbrace v_j \rbrace$, giving the following proposition.

\begin{center}
\begin{tikzpicture}[]
	\node (Q) {$Q$};
	\node[right of=Q,node distance=4cm] (P){$P$};
	\node[below of=Q,node distance=1.5cm] (uQ){$\mu_{u_i}(Q)$};
	\node[below of=P,node distance=1.5cm] (uP){$\mu_{u_i}(P)$};
	\draw[->] (Q) -- (P) node[midway,above]{remove $\lbrace v_j\rbrace$};
	\draw[->] (Q) -- (uQ);
	\draw[->] (P) -- (uP);
	\draw[->] (uQ) -- (uP) node[midway,below]{remove $\lbrace v_j\rbrace$};
\end{tikzpicture}
\end{center}

\begin{prop}\label{prop:subquivers}
	A quiver which contains some mutation-infinite quiver as a subquiver is
	necessarily mutation-infinite. Equivalently any subquiver of a
	mutation-finite quiver is mutation-finite.
\end{prop}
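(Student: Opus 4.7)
The plan is to prove the equivalent contrapositive form: if $Q$ is mutation-finite and $P$ is a subquiver of $Q$, then $P$ is mutation-finite. The original statement then follows immediately by contraposition.

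The key ingredient is the commuting square displayed just before the proposition, which asserts that for any vertex $u_i$ of $P$, the operation of mutating at $u_i$ commutes with the operation of deleting the extra vertices $\{v_j\} = Q \setminus P$. This is essentially a local check from Definition~\ref{def:mutation}: the three rules (adding arrows $i \to j$ through paths $i \to u_i \to j$, reversing arrows at $u_i$, cancelling 2-cycles) only look at arrows incident to $u_i$, and no such arrow can be created or destroyed by removing a vertex that is distinct from $u_i$. I would state this as a short lemma, verifying each of the three rules in turn.

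Next, I would extend this single-step commutation to sequences by induction. Given any finite mutation sequence $\mu_{u_{i_k}} \circ \cdots \circ \mu_{u_{i_1}}$ on vertices of $P$, applying it to $P$ produces the same quiver as applying the same sequence to $Q$ and then deleting $\{v_j\}$. Consequently, the map that sends a quiver $Q' \in [Q]$ (the mutation class of $Q$) to the subquiver obtained by deleting $\{v_j\}$ restricts to a surjection from a subset of $[Q]$ onto $[P]$: every element of $[P]$ is the image of some element of $[Q]$, obtained by lifting a mutation sequence. Since $[Q]$ is finite, its image under vertex deletion is finite, so $[P]$ is finite and $P$ is mutation-finite.

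There is no real obstacle here; the only technical point is to verify carefully that mutation commutes with vertex deletion in the single-step case, which is a direct unpacking of the mutation rules. One should note that the lifted mutation sequence on $Q$ is genuinely well-defined because each $u_{i_\ell}$ remains a vertex in every intermediate quiver (vertex labels are preserved by mutation), so there is no ambiguity in applying the same sequence on both sides of the square.
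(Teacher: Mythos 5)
Your proposal is correct and follows essentially the same route as the paper, which proves the proposition precisely by observing that mutation at a vertex $u_i$ of $P$ commutes with deleting the vertices $\{v_j\}$ (the displayed commuting square) and hence that the mutation class of $P$ is the image under vertex deletion of part of the finite mutation class of $Q$. Your write-up merely makes explicit the single-step verification and the induction over mutation sequences that the paper leaves implicit.
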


Proposition~\ref{prop:subquivers} shows that there are minimal mutation-infinite
quivers with respect to the above partial ordering. Equivalently these minimal
mutation-infinite quivers could be defined as follows:

\begin{mdef}
	A \textbf{minimal mutation-infinite quiver} is a mutation-infinite quiver for
	which every subquiver is mutation-finite.
\end{mdef}

\subsection{Properties of minimal mutation-infinite quivers}\label{sec:mmi-props}

In their paper on the classification of mutation-finite quivers Felikson,
Shapiro and Tumarkin prove a useful fact about minimal mutation-infinite
quivers.

\begin{thm}[{\cite[Lemma 7.3]{FST-FiniteMutation}}]\label{thm:mmi_size}
	Any minimal mutation-infinite quiver contains at most 10 vertices.  
	Equivalently, any mutation-infinite quiver of size greater than 10 must
	contain a mutation-infinite subquiver.
\end{thm}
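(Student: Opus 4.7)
The plan is to combine the Felikson--Shapiro--Tumarkin classification of mutation-finite quivers with a rigidity argument for the subquivers obtained by single vertex deletions. By that classification, every mutation-finite quiver either arises from a triangulated bordered surface with marked points (equivalently, admits a block decomposition into the FST elementary blocks) or is mutation-equivalent to one of eleven exceptional quivers. The largest exceptional class, $E_8^{(1,1)}$, has exactly $10$ vertices, so every mutation-finite quiver on at least $11$ vertices is of surface type and in particular admits a block decomposition.

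Assume for contradiction that $Q$ is a minimal mutation-infinite quiver with $n \geq 11$ vertices. For each vertex $v$ the subquiver $Q \setminus v$ is mutation-finite on $n - 1 \geq 10$ vertices. If $n \geq 12$ then every such deletion has at least $11$ vertices and is therefore of surface type; for $n = 11$ a priori a few deletions could be mutation-equivalent to $E_8^{(1,1)}$, but those cases can be handled separately by inspecting how $E_8^{(1,1)}$ might sit as a vertex-deletion subquiver of a quiver all of whose other deletions are surface-type, and ruling out each extension by direct computation. In the generic case, every $Q \setminus v$ then inherits a block decomposition.

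The crux is to patch these local block decompositions into a global block decomposition of $Q$, which would contradict mutation-infiniteness. For any two vertices $v \neq w$ the subquivers $Q \setminus v$ and $Q \setminus w$ share the common $(n-2)$-vertex subquiver $Q \setminus \{v, w\}$, whose block decomposition they must each extend. Running this comparison across all pairs $(v,w)$ and using that each restricted block structure comes from a triangulated surface of fixed topological type, one locates every arrow of $Q$ inside a single coherent block decomposition. The main obstacle will be exactly this gluing: block decompositions are not unique (flips of a triangulation produce different admissible decompositions of mutation-equivalent quivers), so the decompositions of $Q \setminus v$ and $Q \setminus w$ need only agree on $Q \setminus \{v,w\}$ up to such flips, and controlling this ambiguity is where the explicit bound $n \leq 10$ is forced, together with the residual $E_8^{(1,1)}$ analysis at $n = 11$.
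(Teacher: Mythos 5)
This theorem is not proved in the paper at all: it is imported verbatim as \cite[Lemma 7.3]{FST-FiniteMutation}, so the only thing to compare your argument against is the proof in Felikson--Shapiro--Tumarkin's paper, and your strategy is in fact the same one they use (reduce via the classification to surface-type vertex-deletions, then pass from local block decompositions to a global one). The problem is that you have only sketched that strategy, and the step you defer is the entire content of the result. Your third paragraph asserts that ``running this comparison across all pairs $(v,w)$ \dots one locates every arrow of $Q$ inside a single coherent block decomposition,'' and then immediately concedes that block decompositions are non-unique, that the decompositions of $Q\setminus v$ and $Q\setminus w$ need only agree up to flips, and that ``controlling this ambiguity is where the explicit bound $n\le 10$ is forced.'' That is an announcement of the theorem, not a proof of it. The local-to-global patching genuinely fails for small $n$ --- that is exactly why non-block-decomposable mutation-finite quivers (the eleven exceptional classes) and minimal mutation-infinite quivers of orders $4$ through $10$ exist --- so you must exhibit a mechanism that makes it succeed for $n\ge 11$, and no such mechanism appears in your write-up. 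In FST's paper this occupies a substantial technical analysis of how blocks can be attached to one another; none of that is reproduced or replaced here.

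Two further gaps: (i) the $n=11$ case, where some deletion could be mutation-equivalent to $E_8^{(1,1)}$, is dismissed as ``ruling out each extension by direct computation'' without the computation being performed or even set up precisely; and (ii) your claim that every mutation-finite quiver on at least $11$ vertices is of surface type is not quite right, because the classification applies to \emph{connected} quivers --- a vertex-deletion $Q\setminus v$ can be disconnected (if $v$ is a cut vertex), and could then contain an exceptional component such as $E_8^{(1,1)}$ together with other vertices, so it need not be block-decomposable even when it has $11$ or more vertices. This case has to be excluded or absorbed into the gluing argument. As it stands, the proposal correctly identifies the right high-level route but leaves the decisive step unproved, so it does not constitute a proof.
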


\begin{figure}
	\hspace*{\fill}%
	\hbox{\includegraphics{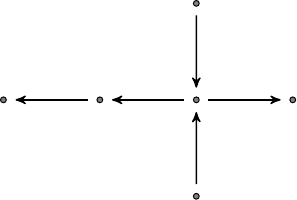}}%
	\hfill%
	\hbox{\includegraphics{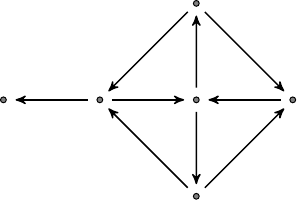}}%
	\hfill%
	\hbox{\includegraphics{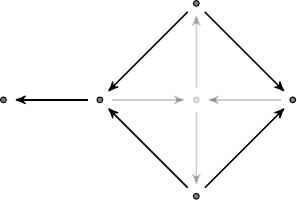}}%
	\hspace*{\fill}%
	\caption{The left quiver is minimal mutation-infinite. Mutation at the central
		vertex yields the central quiver however this is not minimal
		mutation-infinite. Removing the central vertex gives the right quiver, which
		is also mutation-infinite.}
	\label{fig:mmi-mut}
\end{figure}

An important restriction of the minimal mutation-infinite property of quivers is
that it is not preserved by mutation.  An example of a mutation which does not
preserve the minimal mutation-infinite property of a quiver is given in
Figure~\ref{fig:mmi-mut}. However there are some specific mutations which do
preserve this property, of which sink-source mutations are an example.

\begin{mdef}
	A \textbf{sink} is a vertex in a quiver such that all adjacent arrows are
	directed into that vertex, whereas a \textbf{source} is a vertex such that all
	adjacent arrows are directed away from the vertex.
	Define a \textbf{sink-source mutation} as a mutation at either a sink or a source.
\end{mdef}

\begin{prop}\label{prop:ss-mut}
	Sink-source mutations of a quiver preserve whether it is minimal
	mutation-infinite or not.
\end{prop}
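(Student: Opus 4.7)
The plan is to exploit the fact that mutation at a sink or source is essentially trivial: it changes nothing about the quiver apart from the orientations of the edges at $k$. Indeed, if $k$ is a sink or source in $Q$ then there is no path of the form $i\to k \to j$ through $k$, so step (1) of Definition~\ref{def:mutation} adds no arrows, step (3) removes none, and only step (2) has any effect. Consequently $\mu_k(Q)$ has exactly the same vertex set as $Q$ and exactly the same arrows, except those incident to $k$ are reversed. In particular $\mu_k$ is an involution on sink-source configurations: $k$ is a source in $\mu_k(Q)$ whenever it was a sink in $Q$, and vice versa. This means it suffices to prove one direction, say that if $Q$ is minimal mutation-infinite then so is $\mu_k(Q)$.

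Since $\mu_k(Q)$ is mutation-equivalent to $Q$, it is mutation-infinite. It remains to check that every proper subquiver of $\mu_k(Q)$ is mutation-finite. I would handle this by deleting an arbitrary vertex $v$ of $\mu_k(Q)$ and comparing the result to a subquiver of $Q$. If $v=k$, then removing $k$ from $\mu_k(Q)$ deletes all arrows at $k$ anyway, so the result is literally equal to $Q\setminus\{k\}$, which is mutation-finite by assumption. If $v\neq k$, then by the commutative square preceding Proposition~\ref{prop:subquivers} (applied to the single mutation at $k$), we have $\mu_k(Q)\setminus\{v\} = \mu_k(Q\setminus\{v\})$, which is mutation-equivalent to $Q\setminus\{v\}$ and therefore mutation-finite.

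Thus every subquiver of $\mu_k(Q)$ is mutation-finite, so $\mu_k(Q)$ is minimal mutation-infinite. The converse follows by reapplying the same argument to $\mu_k(Q)$, using that $k$ is a source (resp. sink) in $\mu_k(Q)$ and that $\mu_k\circ\mu_k$ is the identity.

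There is no real obstacle here; the main point is simply to verify carefully that step (1) of the mutation rule is vacuous at a sink or source, so that mutation at $k$ commutes with the deletion of any other vertex, and coincides with deletion itself at $k$.
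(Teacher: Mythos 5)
Your proposal is correct and follows essentially the same argument as the paper: show the image is mutation-infinite by mutation-equivalence, observe that deleting the mutated vertex yields literally the same subquiver as in the original, and note that deleting any other vertex yields a quiver one mutation away from a mutation-finite subquiver of the original. Your explicit verification that step (1) of mutation is vacuous at a sink or source, and the resulting involution property, simply spell out details the paper leaves implicit.
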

\begin{proof}
	A mutation at a sink (resp.\ source) reverses the direction of all arrows
	adjacent to it, so the vertex becomes a source (sink).

	Let $P$ be a minimal mutation-infinite quiver, and $Q$ a quiver obtained from
	$P$ by a sink-source mutation at a vertex $v$. The quiver $Q$ is
	mutation-equivalent to $P$, so is mutation-infinite. The subquiver of $Q$
	obtained by removing the mutated vertex $v$ is precisely the same as the
	subquiver of $P$ constructed by removing $v$. Any other subquiver of $Q$ is a
	single sink-source mutation away from the corresponding subquiver of $P$.
	Every subquiver of $P$ is mutation-finite, so every subquiver of $Q$ is also
	mutation-finite, hence $Q$ is minimal mutation-infinite.
\end{proof}

The following well known fact limits the possible quivers which could be minimal
mutation-infinite.

\begin{prop}\label{prop:3-mut-inf}
	A mutation-finite quiver with at least 3 vertices has at most two arrows
	between any two vertices.
\end{prop}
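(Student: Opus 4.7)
The plan is to argue by contradiction, using Proposition~\ref{prop:subquivers} to reduce to the $3$-vertex case. Suppose $Q$ is a mutation-finite quiver with at least three vertices and that two vertices $i, j$ of $Q$ are joined by $k \geq 3$ arrows. I would pick a third vertex $\ell$ adjacent to at least one of $i, j$ and pass to the $3$-vertex subquiver $Q'$ supported on $\{i, j, \ell\}$, which is mutation-finite by Proposition~\ref{prop:subquivers}. It therefore suffices to show that no mutation-finite $3$-vertex quiver can contain a $k$-fold arrow with $k \geq 3$.

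On $Q'$ write $a$, $b$ and $c := k$ for the arrow multiplicities on $\{i, \ell\}$, $\{j, \ell\}$ and $\{i, j\}$, with at least one of $a, b$ positive. If the triangle $Q'$ is acyclic with $a, b \geq 1$, then mutation at the vertex lying in the middle of the length-two path joining $i$ and $j$ creates $ab$ new arrows between $i$ and $j$ by rule~1 of Definition~\ref{def:mutation}, with no cancellation from rule~3; the multiplicity on $\{i, j\}$ therefore becomes $c + ab > c$. Iterating this procedure forces the maximum multiplicity to grow without bound, giving infinitely many mutation-distinct quivers in the mutation class of $Q'$ and contradicting its mutation-finiteness.

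The main obstacle is the cyclic-triangle case, together with the degenerate acyclic cases in which one of $a, b$ vanishes: here rule~3 of Definition~\ref{def:mutation} may cancel some of the arrows added by rule~1, and the maximum multiplicity need not increase under an arbitrary choice of mutation. I would resolve this by a short direct calculation showing that among the three possible mutations of a triangle $(a, b, c)$ with $\max(a, b, c) \geq 3$, at least one strictly increases the maximum multiplicity; iterating again produces infinitely many mutation-distinct multisets and yields the contradiction. Alternatively one may quote the classification of mutation-finite rank-$3$ quivers from~\cite{FST-FiniteMutation}, whose complete list contains no quiver with an arrow of multiplicity $\geq 3$.
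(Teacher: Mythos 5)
The paper does not actually prove this proposition: it defers entirely to Derksen and Owen~\cite[Section 3]{DO-Finite}, so any self-contained argument is already a different route. Your reduction to the rank-$3$ case via Proposition~\ref{prop:subquivers} is the right first step (with the caveat that you need a third vertex adjacent to $i$ or $j$, i.e.\ you are implicitly using connectedness --- as stated, with a triple arrow plus an isolated vertex the proposition is literally false, and Derksen--Owen state it for connected quivers). The acyclic branch of your rank-$3$ analysis is also fine: if $i\to\ell\to j$ and $i\to j$ with multiplicities $a,b\ge 1$ and $c$, mutation at $\ell$ produces multiplicity $c+ab$ with no cancellation. But note that this single mutation already lands you in a cyclic triangle, so ``iterating this procedure'' immediately defers to the cyclic case, which you correctly identify as the main obstacle.

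The proposed resolution of that obstacle, however, contains a false claim. It is not true that for every triangle $(a,b,c)$ with $\max(a,b,c)\ge 3$ some single mutation strictly increases the maximum multiplicity. Take the oriented $3$-cycle $x\to y\to z\to x$ with multiplicities $(1,1,3)$: mutation at $y$ gives edge multiplicities $(1,1,|1\cdot 1-3|)=(1,1,2)$, while mutation at $x$ gives $(1,3,2)$ and mutation at $z$ gives $(2,1,3)$ --- in no case does the maximum exceed $3$. (The quiver is of course still mutation-infinite; the maximum grows only after a second, well-chosen mutation, e.g.\ from $(1,3,2)$ cyclic one obtains multiplicity $|3\cdot 2-1|=5$.) So the monovariant ``maximum multiplicity'' does not work one step at a time, and your induction stalls exactly where the real work lies. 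To repair it you would need either a two-step growth argument, or the standard invariant-based analysis of cyclic rank-$3$ quivers (the Markov-type quantity $a^2+b^2+c^2-abc$, as in Beineke--Br\"ustle--Hille or Derksen--Owen). Your alternative suggestion --- quoting the classification of mutation-finite rank-$3$ quivers --- does close the gap, but then the argument collapses to a citation, which is precisely what the paper does.
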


A comprehensive proof of this fact can be found in Derksen and Owen's
paper~\cite[Section 3]{DO-Finite}.  This is equivalent to stating that any
quiver with 3 or more arrows between any two vertices is necessarily
mutation-infinite.

Every subquiver of a minimal mutation-infinite quiver is mutation-finite and so
each subquiver has at most 2 arrows between any two vertices. Therefore the
minimal mutation-infinite quiver itself has at most 2 arrows between any two
vertices.

\begin{prop}\label{prop:3-vert-mmi}
	Any mutation-infinite quiver with 3 vertices is minimal mutation-infinite.
\end{prop}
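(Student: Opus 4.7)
The plan is to argue directly from the definition: one must verify that every proper subquiver of a 3-vertex mutation-infinite quiver $Q$ is mutation-finite. Since the proper subquivers are obtained by removing at least one vertex, they have at most $2$ vertices, and so it suffices to show that every quiver on at most $2$ vertices is mutation-finite.

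For a quiver on $0$ or $1$ vertex this is immediate since there are no nontrivial mutations available. For a quiver on $2$ vertices, the no-loops and no-2-cycles assumption in the definition of a quiver forces all arrows to point the same way between the two vertices; call this common multiplicity $n$. Mutation at either vertex simply reverses the direction of all these $n$ arrows by rule (2) of Definition~\ref{def:mutation}, since rules (1) and (3) cannot produce anything on only two vertices. Hence the mutation class has size at most $2$, and in particular is finite.

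Given this, the proposition follows immediately: if $Q$ is mutation-infinite on $3$ vertices, then every proper subquiver has at most $2$ vertices and is therefore mutation-finite by the above, so $Q$ meets the definition of minimal mutation-infinite.

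I do not anticipate a genuine obstacle here; the only thing to be careful about is making the two-vertex case fully explicit, since the reader might otherwise expect an appeal to a larger classification result (such as that of Felikson, Shapiro and Tumarkin) when in fact the argument is elementary.
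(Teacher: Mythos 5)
Your proof is correct and takes essentially the same approach as the paper: the paper's proof also reduces to observing that every two-vertex quiver is mutation-finite because mutation merely reverses all arrows, giving a mutation class of size at most two. Your version is slightly more explicit about the degenerate cases and why rules (1) and (3) of mutation are vacuous on two vertices, but the argument is the same.
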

\begin{proof}
	All quivers with only 2 vertices are mutation-finite, as mutation at either
	vertex just reverses the direction of the arrows. Hence the mutation-class
	contains just these two quivers.
\end{proof}

\section{Coxeter simplices}\label{sec:simplices}

It is known that hyperbolic Coxeter simplices of finite volume exist up to
dimension 9 and so admit diagrams with up to 10 vertices. In the following
section we explore the links between these diagrams and the minimal
mutation-infinite quivers which also exist with up to 10 vertices, as stated in
Theorem~\ref{thm:mmi_size}.

An $n$-dimensional Coxeter simplex is considered in one of three spaces:
spherical, Euclidean and hyperbolic. As a simplex they are the convex hull of
$n+1$ points and so have $n+1$ facets. 

\begin{mdef}
	A simplex is a \textbf{Coxeter simplex} if the hyperplanes which make up the
	faces have dihedral angles all submultiples of $\pi$. In the case of
	hyperbolic Coxeter simplices we allow the case where the planes meet at the
	boundary and so have dihedral angle $0$.
\end{mdef}

Given a Coxeter simplex we denote the hyperplanes by $H_i$ and the angle
between hyperplanes $H_i$ and $H_j$ by $\frac{\pi}{k_{ij}}$.

\begin{mdef}
	The \textbf{Coxeter diagram} associated to a Coxeter simplex is an unoriented
	graph with a vertex $i$ for each hyperplane $H_i$ and a weighted edge between
	vertices $i$ and $j$ when $k_{ij} > 3$ with weight $k_{ij}$. We add an
	unweighted edge between $i$ and $j$ when $k_{ij} = 3$, and if the angle
	between two hyperplanes $H_i$ and $H_j$ is $\frac{\pi}{2}$ then no edge is
	put between $i$ and $j$.

	In the hyperbolic case, where two hyperplanes meet at the boundary, then the
	edge is given weight $\infty$.
\end{mdef}

The Coxeter group associated to a given Coxeter diagram is constructed from the
following representation, where each generator $s_i$ represents reflection in
the hyperplane $H_i$,
\[ \left\langle s_i \st s_i^2 = 1 = \left( s_i s_j \right)^{k_{ij}}
\right\rangle. \]

\subsection{Simply-laced Coxeter simplex diagrams in different spaces}

\begin{mdef}
	\textbf{Simply-laced} Coxeter diagrams are those for which $k_{ij} \in
	\left\lbrace 2, 3 \right\rbrace$ for all $i$ and $j$.
\end{mdef}

This is equivalent to only allowing angles of $\frac{\pi}{2}$ and
$\frac{\pi}{3}$ in the Coxeter simplex.
Simply-laced Coxeter diagrams only contain edges with no weights, and so a
quiver can be constructed from the diagram by choosing an orientation for each
edge.

Coxeter simplices can be considered over spherical, Euclidean or hyperbolic
space. In each case the quivers obtained by choosing an orientation for the
simply-laced Coxeter diagrams have different properties. The following are
well known results about the spherical and Euclidean cases.

\begin{rem}\label{rem:cox-mutfin}
	In~\cite{Coxeter-ReflGroups}, Coxeter classified simply-laced spherical
	Coxeter simplex diagrams as Dynkin diagrams of type $A$, $D$ and $E$.
	Orientations of these diagrams are mutation-finite quivers and give
	finite-type cluster algebras, as shown in Fomin and Zelevinsky's
	classification of finite-type cluster algebras~\cite{FZ-CA2}.

	Similarly, simply-laced Euclidean Coxeter simplex diagrams are affine Dynkin
	diagrams of type $\tilde{A}$, $\tilde{D}$ and $\tilde{E}$. Felikson, Shapiro
	and Tumarkin's mutation-finite classification~\cite{FST-FiniteMutation} shows
	that orientations of these diagrams are mutation-finite but give infinite-type
	cluster algebras.
\end{rem}

It is known that the hyperbolic Coxeter simplex diagrams satisfy the following
property.

\begin{rem}\label{rem:hcs-subdiags}
	Any subdiagram of a simply-laced hyperbolic Coxeter simplex diagram is either
	a Dynkin or an affine Dynkin diagram.
\end{rem}

This follows from Theorems 3.1 and 3.2 of Vinberg's paper~\cite{Vinberg-HypGroups}
concerning the reflection groups generated by the reflections in $n$ hyperplanes
of an $n$ dimensional hyperbolic Coxeter simplex.

\subsection{A family of minimal mutation-infinite quivers}

Given a simply-laced hyperbolic Coxeter simplex diagram, construct a quiver by
choosing an orientation on each edge. From Remark~\ref{rem:hcs-subdiags},
any subquiver of this quiver will be an orientation of either a Dynkin diagram
or an affine Dynkin diagram and so Remark~\ref{rem:cox-mutfin}
shows that any subquiver is mutation-finite.

Using the classification of mutation-finite quivers given
in~\cite{FST-FiniteMutation}, it can be seen that almost all orientations of
hyperbolic Coxeter simplex diagrams are mutation-infinite. There is precisely
one mutation-finite orientation of hyperbolic Coxeter simplex diagrams for each
size $k$ between $5$ and $9$, with two of size $4$, as shown in
Figure~\ref{fig:fin-hcs}.

\begin{figure}
	\includegraphics{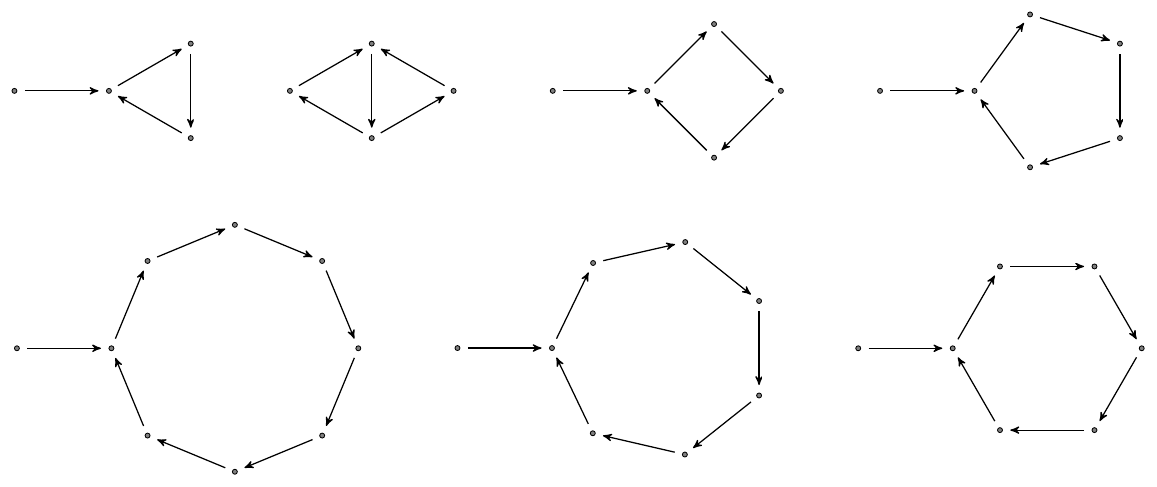}
	\caption{Mutation-finite orientations of hyperbolic Coxeter simplex diagrams}
	\label{fig:fin-hcs}
\end{figure}

It follows from Remarks~\ref{rem:cox-mutfin}
and~\ref{rem:hcs-subdiags} that all mutation-infinite orientations of
hyperbolic Coxeter simplex diagrams are in fact minimal mutation-infinite
quivers. This then raises the question of whether all minimal mutation-infinite
quivers can be given in this form or not.

\begin{prop}\label{prop:nonhcs}
	There exist minimal mutation-infinite quivers which are not orientations of a
	hyperbolic Coxeter simplex diagram for all sizes of quiver from 5 to 10.
\end{prop}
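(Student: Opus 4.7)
The plan is to exhibit, for each size $n \in \{5,6,7,8,9,10\}$, an explicit minimal mutation-infinite quiver on $n$ vertices whose underlying graph is not that of any simply-laced hyperbolic Coxeter simplex diagram. The cleanest way to guarantee the second property is to choose candidates containing at least one double arrow: the diagrams introduced in Section~\ref{sec:simplices} are simply-laced, so none of their orientations contains a multi-edge, and hence any quiver with a double arrow is automatically excluded from that family.

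For each $n$ in the range I would write down a quiver $Q_n$ obtained by attaching a short oriented tree to a small mutation-infinite core, where the core is a triangle carrying one double arrow together with a suitable orientation. The attachments are to be chosen so that deleting any single vertex of $Q_n$ leaves a quiver that can be identified, up to a sequence of mutations, with an orientation of a Dynkin or affine Dynkin diagram, or with one of the eleven exceptional mutation-finite classes from~\cite{FST-FiniteMutation}. By Proposition~\ref{prop:subquivers}, checking the minimality condition then reduces to verifying that every one of the $n$ single-vertex deletions lands in the Felikson--Shapiro--Tumarkin list.

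The verification naturally splits into two parts. First, to show that $Q_n$ is itself mutation-infinite, I would exhibit a short explicit mutation sequence in the mutation class of $Q_n$ producing a quiver with three or more arrows between some pair of vertices, then invoke Proposition~\ref{prop:3-mut-inf}; alternatively, one can just point to a mutation-infinite subquiver of a mutation-equivalent quiver. Second, one must check that each of the $n$ vertex-deleted subquivers is mutation-finite, which by Proposition~\ref{prop:subquivers} and the mutation-finite classification is a finite matching problem.

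The main obstacle is the subquiver case analysis in the second part: for small $n$ it can be done directly by inspecting each of the at most $10$ deletions and recognising the result inside the mutation-finite classification, but making this uniform across all $n$ from $5$ to $10$ is tedious. In practice this is handled by the computational procedures of Appendix~\ref{sec:compute}, which for each fixed size enumerate the relevant subquivers and certify their membership in the list of~\cite{FST-FiniteMutation}; for the existence statement of Proposition~\ref{prop:nonhcs} it suffices to present one such $Q_n$ per size with the verification recorded.
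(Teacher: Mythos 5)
Your strategy is genuinely different from the paper's: the paper starts from a known minimal mutation-infinite quiver (the all-same-direction orientation of the tree-like hyperbolic Coxeter simplex diagram of each size), mutates at the middle vertex of an embedded $A_3$ to create an oriented triangle, and observes that the result is no longer an orientation of any simplex diagram while minimality is inherited almost for free, since each vertex-deleted subquiver of the new quiver is either equal to, or one mutation away from, the corresponding subquiver of the old one. Your plan instead excludes the hyperbolic Coxeter family via double arrows and verifies minimality by matching all vertex deletions against the Felikson--Shapiro--Tumarkin classification. That verification scheme is sound in principle, but as written there are two genuine gaps.

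First, and most seriously, the double-arrow strategy cannot work at $n=5$. According to the classification in this paper, minimal mutation-infinite quivers containing a double arrow exist only in sizes $6$ through $10$ (see Table~\ref{reps:da} and the surrounding discussion in Section~\ref{sec:class}); every minimal mutation-infinite quiver on $5$ vertices is simply-laced and of the form shown in Figure~\ref{fig:nonhcs-5}. So for size $5$ you would need a different exclusion criterion --- e.g.\ the presence of an oriented cycle in a quiver whose underlying graph is not a simplex diagram, which is what the paper uses. Second, the proposal never actually exhibits the quivers $Q_n$: for an existence statement you must write down at least one concrete example per size and carry out (or at least cite a completed record of) the mutation-infiniteness check and the $n$ subquiver checks; ``I would write down a quiver obtained by attaching a tree to a core'' is a plan, not a proof. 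A smaller but real inconsistency: you describe the double-arrow triangle as a ``mutation-infinite core,'' but if any proper subquiver of $Q_n$ were mutation-infinite then $Q_n$ would fail minimality by Proposition~\ref{prop:subquivers}; the oriented triangle with one double arrow is in fact mutation-finite (it lies in an affine class), which is what your construction actually needs.
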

\begin{figure}
	\hspace*{\fill}
	\raisebox{-0.5\height}{\includegraphics{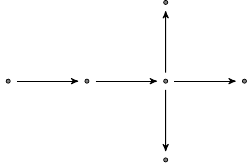}}%
	\hfill%
	\raisebox{-0.5\height}{\includegraphics{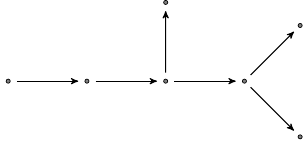}}%
	\hfill%
	\raisebox{-0.5\height}{\includegraphics{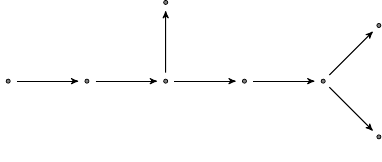}}%
	\hspace*{\fill}
	\caption{Orientations of tree-like hyperbolic Coxeter simplex diagrams of size
	$6$, $7$ and $8$.}
	\label{fig:tree-like}
\end{figure}
\begin{figure}
	\hspace*{\fill}
	\raisebox{-0.5\height}{\includegraphics{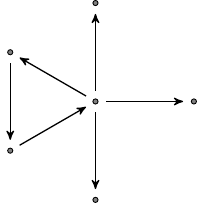}}%
	\hfill%
	\raisebox{-0.5\height}{\includegraphics{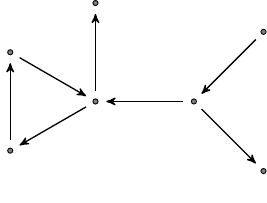}}%
	\hfill%
	\raisebox{-0.5\height}{\includegraphics{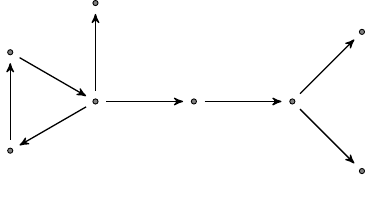}}%
	\hspace*{\fill}
	\caption{Minimal mutation-infinite quivers not orientations of hyperbolic
		Coxeter simplex diagrams.}
	\label{fig:nonhcs}
\end{figure}
\begin{proof}
	To prove this it suffices to give an example of such a quiver for each size.
	The construction of this quiver for size $6 \leq k \leq 10$ is as follows:

	Take the tree-like hyperbolic Coxeter simplex diagram of size $k$ and orient
	it in such a way that all arrows point the same way as illustrated in
	Figure~\ref{fig:tree-like}. This quiver $Q$ is minimal mutation-infinite as
	shown above, and contains an orientation of the Dynkin diagram $A_3$ as a
	subquiver. Mutating at the centre vertex of this $A_3$ creates an oriented
	triangle in the resulting quiver $P$, giving the quivers in
	Figure~\ref{fig:nonhcs} which are not orientations of hyperbolic Coxeter
	simplex diagrams.

	The resulting quiver is mutation-equivalent to the orientation of a hyperbolic
	Coxeter simplex, so is mutation-infinite. Each subquiver obtained by removing
	vertex $n$ from $P$ is either the same as the subquiver obtained by removing
	$n$ from $Q$, or a single mutation away from it. Hence as $Q$ is minimal
	mutation-infinite, all such subquivers are mutation-finite and so $P$ is also
	minimal mutation-infinite.

	The only minimal mutation-infinite quivers with 5 vertices are of the form
	shown in Figure~\ref{fig:nonhcs-5}. Mutation of an orientation of a hyperbolic
	Coxeter simplex diagram gives such a quiver, and all subquivers are
	mutation-equivalent to subquivers of the initial quiver so the resulting quiver
	is again minimal mutation-infinite.
\end{proof}
\begin{figure}
	\hspace*{\fill}
	\raisebox{-0.5\height}{\includegraphics{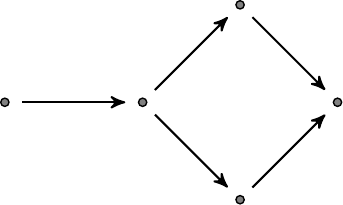}}%
	\hfill%
	\raisebox{-0.5\height}{\includegraphics{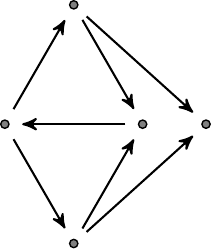}}%
	\hspace*{\fill}
	\caption{The quiver on the left is an orientation of a hyperbolic Coxeter
	simplex diagram. Mutation at the trivalent vertex yields the quiver on the
	right which is also minimal mutation-infinite.}
	\label{fig:nonhcs-5}
\end{figure}

\section{Minimal mutation-infinite quiver moves}\label{sec:moves}

Orientations of hyperbolic Coxeter simplex diagrams give a family of minimal
mutation-infinite quivers, however Proposition~\ref{prop:nonhcs} shows the
existence of other minimal mutation-infinite quivers. This section discusses the
approach taken to classify all such quivers.

Many examples of minimal mutation-infinite quivers are only a small number of
mutations away from an orientation of a hyperbolic Coxeter diagram. As discussed
in Section~\ref{sec:mmi-props} mutations do not in general preserve the minimal
mutation-infinite property of a quiver, however it can be proved that specific
mutations, where a vertex is surrounded by a particular subquiver, do indeed
preserve this property. An example of such a mutation was used in the proof of
Proposition~\ref{prop:nonhcs}. These particular mutations which preserve the
minimal mutation-infinite property can be considered as \textbf{moves} among all
minimal mutation-infinite quivers.

As mutation acts by changing the quiver locally around the mutated vertex, while
leaving arrows further from the vertex fixed, these moves can be defined in
terms of the subquivers which change under the mutations. In this way applying the move
is equivalent to replacing some subquiver with a different subquiver.

The minimal mutation-infinite preserving mutations often depend on some
restriction of how the vertices in the subquivers are connected in the whole
quiver outside the subquiver. This data then needs to be encoded in the
moves along with the subquivers.

\begin{mdef}
	When referred to in a move, a \textbf{line} is a line of vertices such that one
	end point is connected to the move subquiver. A line of length zero consisting
	of just a single vertex is also considered valid.
\end{mdef}

\subsection{A move example}

\begin{figure}
	\includegraphics{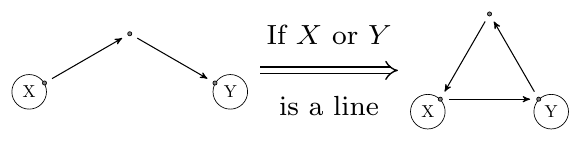}
	\caption{An example of a minimal mutation-infinite move.}
	\label{fig:movexample}
\end{figure}

Figure~\ref{fig:movexample} gives an example of one such move. The move is
applied to a quiver by mutating at the central vertex. The circles labelled $X$
and $Y$ denote connected components of the quiver fixed by the move. The
vertex on the boundary of $X$ is considered to be contained in $X$. In this
case the move requires that one of the components be a line (or just the single
vertex) for the move to apply. Figure~\ref{fig:moveapp} shows some examples of
quivers for which this move is applicable or not and Figure~\ref{fig:movexapp}
shows how it acts on the first quiver in Figure~\ref{fig:moveapp}.

\begin{figure}
	\hspace*{\fill}
	\raisebox{-0.5\height}{\includegraphics{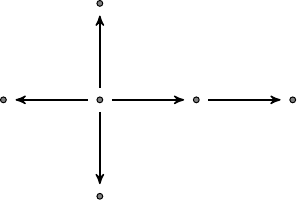}}%
	\hfill%
	\raisebox{-0.5\height}{\includegraphics{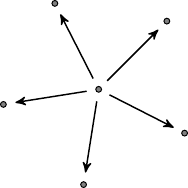}}%
	\hfill%
	\raisebox{-0.5\height}{\includegraphics{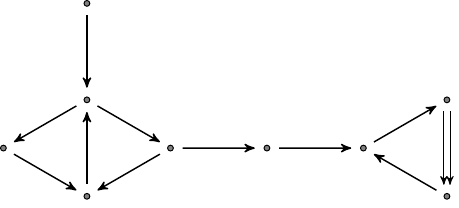}}%
	\hspace*{\fill}
	\caption{The move in Figure~\ref{fig:movexample} applies to the first quiver,
		but does not apply to the others. The second quiver does not contain either
		subquiver, while the third does, but neither component is a line with an
		endpoint in the subquiver.}
	\label{fig:moveapp}
\end{figure}
\begin{figure}
	\includegraphics{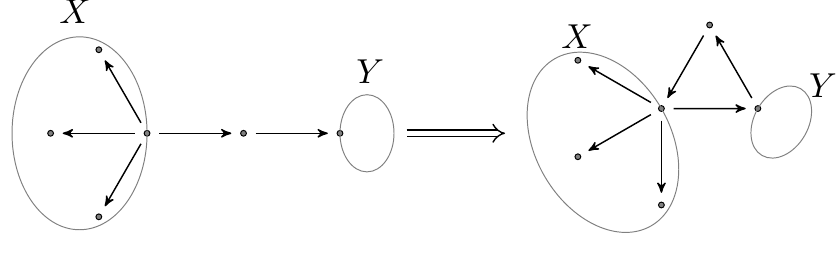}
	\caption{An example of how the move in Figure~\ref{fig:movexample} changes a
	quiver. Note that $Y$ consists of a single vertex, and so can be thought of as
	a line of length zero.}
	\label{fig:movexapp}
\end{figure}

\begin{prop}
	The image of a minimal mutation-infinite quiver under the move in
	Figure~\ref{fig:movexample} is minimal mutation-infinite.
\end{prop}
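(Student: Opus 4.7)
The plan is to follow the same template as Proposition~\ref{prop:ss-mut}, but to handle carefully the one case where that template breaks down. Let $P$ be minimal mutation-infinite, let $v$ denote the central vertex of the move, and set $Q := \mu_v(P)$. Since $Q$ is mutation-equivalent to $P$ it is automatically mutation-infinite, so the whole content of the proposition is to verify that every subquiver of $Q$ is mutation-finite.

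For each vertex $w$ of $Q$ other than $v$, mutation at $v$ commutes with the deletion of $w$, so $Q\setminus\{w\} = \mu_v(P\setminus\{w\})$. Because $P$ is minimal mutation-infinite, $P\setminus\{w\}$ is mutation-finite, hence so is $Q\setminus\{w\}$. This disposes of every subquiver obtained by deleting a non-mutated vertex in a single line, exactly as in the sink-source case.

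The genuine obstacle is the subquiver $Q\setminus\{v\}$ obtained by deleting the mutated vertex itself. Here the sink-source shortcut fails: because $v$ is neither a sink nor a source in $P$, the quiver $Q\setminus\{v\}$ is not equal to $P\setminus\{v\}$ but differs from it by the arrows created in step~(1) of the mutation, coming from the paths $i\to v\to j$. The idea is to read off $Q\setminus\{v\}$ directly from the move picture, using the hypothesis that one of the components, say $Y$, is a line with its endpoint attached to the move subquiver. This restriction controls exactly which paths $i\to v\to j$ can occur and hence exactly which new arrows appear. Combining these new arrows with the fixed parts of $X$ and $Y$ produces a quiver of a very specific shape (essentially an oriented tree with one extra edge running into the line $Y$), which one then identifies with an orientation of a Dynkin or affine Dynkin diagram, or more generally with a quiver listed in the Felikson--Shapiro--Tumarkin classification~\cite{FST-FiniteMutation}.

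The hard step is therefore the explicit case analysis of $Q\setminus\{v\}$: one must check that for every way of connecting $X$ and the line $Y$ consistent with the move's hypothesis, the resulting quiver falls inside the list of mutation-finite types. This is the only place the line condition enters, and it is precisely what prevents the three-arrow obstruction of Proposition~\ref{prop:3-mut-inf} from appearing among the newly-created arrows. Once this case check is done, together with the routine arguments of the previous paragraph, both requirements of the definition of minimal mutation-infinite are verified for $Q$.
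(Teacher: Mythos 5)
You correctly isolate the only non-routine point: the subquiver $Q\setminus\{v\}$ obtained by deleting the mutated vertex, which is not recoverable from a subquiver of $P$ by undoing the mutation. But your proposed resolution --- reading off the shape of $Q\setminus\{v\}$ and checking it against the Felikson--Shapiro--Tumarkin list of mutation-finite quivers --- is both left undone and not actually workable as stated. The component $X$ is an arbitrary part of the quiver fixed by the move; it is constrained only by $P$ being minimal mutation-infinite, so it need not be a line or a tree, and there is no finite list of cases to enumerate. Your description of $Q\setminus\{v\}$ as ``essentially an oriented tree with one extra edge running into the line $Y$'' is unjustified for general $X$, and identifying the result with a Dynkin or affine Dynkin orientation cannot succeed when $X$ is large or contains cycles. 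Deferring this as ``the hard step'' leaves the proof with a genuine gap exactly where the content of the proposition lies.

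The paper closes this case with a different and much shorter observation that requires no appeal to any classification: the hypothesis that $Y$ (say) is a line attached to the move subquiver guarantees that $Q\setminus\{v\}$ is isomorphic, as a quiver, to the subquiver of $P$ obtained by deleting the vertex at the far end of that line. Since every vertex-deleted subquiver of the minimal mutation-infinite quiver $P$ is mutation-finite by definition, $Q\setminus\{v\}$ is mutation-finite immediately, whatever $X$ happens to be. This is the precise role of the line condition --- it lets you trade the deleted mutated vertex for a deleted line-end vertex --- rather than (as you suggest) merely controlling which new arrows appear in step~(1) of the mutation. If you replace your case analysis with this identification, the rest of your argument goes through as written.
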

\begin{proof}
	Let $P$ be the initial quiver and $Q$ its image under the move. Denote the
	vertex at which mutation occurs during the move as $w$.

	The move is equivalent to mutation at $w$ hence $Q$ is in the same mutation
	class as $P$. $P$ is mutation-infinite so $Q$ is also mutation-infinite.

	Any subquiver $Q'$ of $Q$, obtained by removing a vertex $v$ in either $X$ or
	$Y$, will contain $w$. Mutating at $w$ will yield a quiver $\mu(Q')$ that is
	equal to one obtained by removing the corresponding vertex $v$ from $P$. As
	$P$ is minimal mutation-infinite, such a subquiver of $P$ is necessarily
	mutation-finite, hence $\mu(Q')$ is mutation-finite and so $Q'$ is also
	mutation-finite.

	Removing $w$ gives a subquiver $Q'$ of $Q$ which is not mutation-equivalent to
	a subquiver of $P$. Instead the extra condition that either $X$ or $Y$ is a
	line ensures that this quiver is a subquiver of $P$ by removing the vertex at
	the end of that line, and so is mutation-finite. For example consider the
	quivers in Figure~\ref{fig:movexapp}, removing $w$ from $Q$  gives a quiver
	which is the same as one obtained by removing the vertex in $Y$ from $P$.

	Hence $Q$ is minimal mutation-infinite.
\end{proof}

The proofs for all moves are similar to this. The moves are always constructed
from sequences of mutations, so the image is mutation-infinite and the quivers
obtained by removing vertices outside those vertices which are mutated by the
move can always be mutated back to a subquiver of the initial quiver. The
challenge is determining whether a quiver obtained by removing a vertex at which
one of the mutations took place is mutation-equivalent to a subquiver of the
initial quiver.

\begin{prop} The move given by reversing the move in Figure~\ref{fig:movexample}
is a valid move.
\end{prop}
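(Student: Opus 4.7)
The plan is to exploit the fact that quiver mutation is involutive: since the forward move in Figure~\ref{fig:movexample} is realised by a single mutation at the central vertex $w$, the reverse move is realised by the same operation, namely mutation at $w$, applied to the image quiver. Moreover the components $X$ and $Y$ lie outside the neighbourhood of $w$ that is affected by the mutation, so they are left unchanged; in particular the hypothesis that one of $X$, $Y$ is a line attached to the local subquiver is preserved by the forward move, and hence is precisely the hypothesis required to apply the reverse move.

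With this observation the verification proceeds in exact parallel with the proof of the forward move. First, let $Q$ be a minimal mutation-infinite quiver to which the reverse move applies, and set $P=\mu_w(Q)$. Since $P$ is mutation-equivalent to $Q$, it is mutation-infinite. For minimality I would treat two cases of subquiver. For any vertex $v\neq w$ lying in $X$ or $Y$, removing $v$ commutes with mutation at $w$, so $\mu_w(P\setminus\{v\}) = Q\setminus\{v\}$; the latter is mutation-finite because $Q$ is minimal mutation-infinite, and therefore $P\setminus\{v\}$ is too. For the remaining subquiver $P\setminus\{w\}$, I would invoke the line condition: if (say) $Y$ is a line with endpoint $v_{\mathrm{end}}$, then a direct inspection of the two sides of the move diagram shows $P\setminus\{w\} = Q\setminus\{v_{\mathrm{end}}\}$, and this is mutation-finite since $Q$ is minimal mutation-infinite.

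The only genuinely new point compared with the forward direction is this final identification, namely that removing $w$ from the mutated quiver $P$ produces the same subquiver as removing the line endpoint from $Q$. I expect this to be the main obstacle in principle, but it is not a computation so much as an inspection of the local picture in Figure~\ref{fig:movexample}: because mutation acts only on arrows incident to $w$, once $w$ is deleted the two sides of the move coincide, and the line condition guarantees the appropriate matching with the subquiver $Q\setminus\{v_{\mathrm{end}}\}$. The general template laid out in the remark following the forward proof then applies, so no additional machinery is needed.
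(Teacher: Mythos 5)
Your overall template (mutation-equivalence gives mutation-infiniteness; removing a vertex $v\neq w$ commutes with $\mu_w$; the only real issue is the subquiver $P\setminus\{w\}$) matches the paper's, but the step you identify as "the only genuinely new point" is exactly where your argument breaks. The justification you give --- "because mutation acts only on arrows incident to $w$, once $w$ is deleted the two sides of the move coincide" --- is a false statement about quiver mutation: step 1 of mutation adds or cancels arrows \emph{between the neighbours of} $w$, which is precisely why the two sides of the move differ away from $w$ (the move creates or destroys an oriented triangle on $w$ and its two neighbours $x\in X$, $y\in Y$). If the two sides really coincided after deleting $w$, the forward move would need no line condition at all. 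Concretely, in the image $P$ of the reverse move the vertex $w$ is a cut vertex separating $X$ from $Y$, so $P\setminus\{w\}$ is the \emph{disjoint} union of $X$ and $Y$; whereas $Q\setminus\{v_{\mathrm{end}}\}$ is still connected, with $X$ joined to the shortened line both through $w$ and through the extra arrow. So your claimed identification $P\setminus\{w\}=Q\setminus\{v_{\mathrm{end}}\}$ fails even when $Y$ is a single vertex (one quiver is disconnected, the other is not). Note that the forward direction's identification goes the other way round and is not symmetric under reversing the move.

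The repair is the paper's argument, which is simpler and needs no line condition for this direction: since $P\setminus\{w\}=X\sqcup Y$, and mutation acts independently on connected components, this quiver is mutation-finite if and only if $X$ and $Y$ each are; both $X$ and $Y$ are complete subquivers of the minimal mutation-infinite quiver $Q$, hence mutation-finite, so $P\setminus\{w\}$ is mutation-finite and $P$ is minimal mutation-infinite. You should replace your final paragraph with this disjoint-union observation rather than trying to match $P\setminus\{w\}$ with a subquiver of $Q$ obtained by deleting a line endpoint.
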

\begin{proof}
	As discussed above, it suffices to show that removing the vertex at which the
	mutation occurs yields a mutation-finite quiver. Denote the initial quiver as
	$P$, the image $Q$ and the mutated vertex $w$.

	Removing $w$ from $Q$ gives a quiver $R$ which is the disjoint union of $X$ and
	$Y$, therefore $R$ is mutation-finite if and only if both $X$ and $Y$ are.

	Both $X$ and $Y$ are contained in $P$, so are subquivers of $P$ and hence are
	mutation-finite. Therefore $R$ is also mutation-finite, so $Q$ is minimal
	mutation-infinite.
\end{proof}

Appendix~\ref{sec:move_list} contains a list of all moves necessary to classify
minimal mutation-infinite quivers.

The moves required to classify all minimal mutation-infinite quivers up to size
9 only have requirements that certain components are lines or are connected to
other components by lines. For the size 10 quivers, stricter conditions are
required as some moves require that a certain quiver constructed from
the components is mutation-finite. The quivers constructed in such a way are
always of a smaller size and so the results for smaller size quivers can be
applied.

\begin{figure}
	\includegraphics{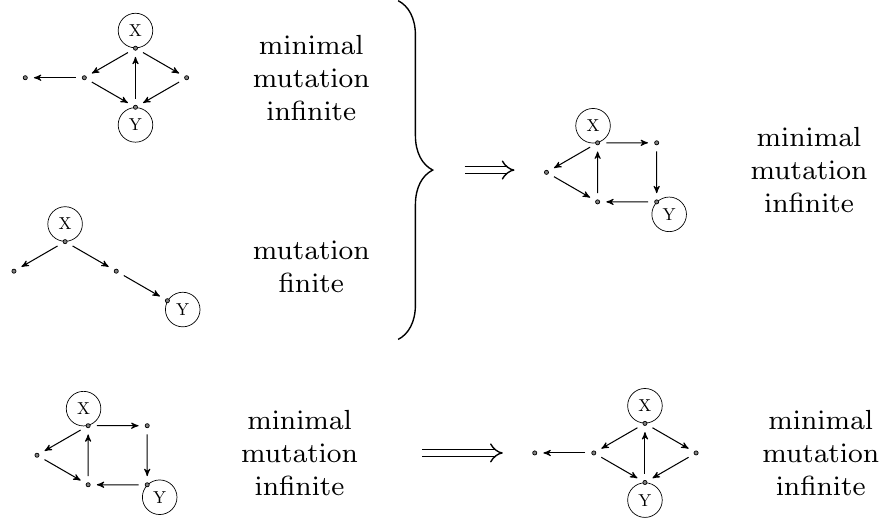}
	\caption{Example of a size 10 move with added constraints}
	\label{fig:10move}
\end{figure}

Figure~\ref{fig:10move} gives an example of one such move for size 10 quivers.
In one direction the move applies without any additional constraints, but in the
other direction the move requires that a certain quiver constructed from quiver
components is mutation-finite.

\newcommand{\hrepspace}{\hspace{2em}}
\newcommand{\vrepspace}{1em}
\newcommand{\dvrepspace}{3em}
\newcommand{\addrep}[1]{\raisebox{-0.5\height}{\includegraphics{#1}}}

\begin{table}[p]
	{Rank 4:}\hspace*{\fill}
	\\
	\addrep{hcs4-1}\hrepspace%
	\addrep{hcs4-2}\hrepspace%
	\addrep{hcs4-2a}\hrepspace%
	\addrep{hcs4-3a}\hrepspace%
	\addrep{hcs4-3b}\hrepspace%
	\addrep{hcs4-3}\hrepspace%
	\\[\vrepspace]
	{Rank 5:}\hfill
	\addrep{hcs5-1}\hrepspace%
	\addrep{hcs5-1a}\hrepspace%
	\addrep{hcs5-2a}\hrepspace%
	\addrep{hcs5-2}\hrepspace%
	\hspace*{\fill}
	\\[\vrepspace]
	{Rank 6:}\hfill
	\addrep{hcs6-1}\hrepspace%
	\addrep{hcs6-3}\hrepspace%
	\addrep{hcs6-2}\hrepspace%
	\addrep{hcs6-2a}\hrepspace%
	\hspace*{\fill}
	\\[\vrepspace]
	{Rank 7:}\hfill
	\addrep{hcs7-1}\hrepspace%
	\hspace*{\fill}
	\\[\vrepspace]
	\addrep{hcs7-2}\hrepspace%
	\addrep{hcs7-2a}\hrepspace%
	\addrep{hcs7-2b}\hrepspace%
	\\[\vrepspace]
	{Rank 8:}\hfill
	\addrep{hcs8-1}\hrepspace%
	\addrep{hcs8-2}\hrepspace%
	\hspace*{\fill}
	\\[\vrepspace]
	\addrep{hcs8-3}\hrepspace%
	\addrep{hcs8-3a}\hrepspace%
	\addrep{hcs8-3b}\hrepspace%
	\\[\vrepspace]
	{Rank 9:}\hfill
	\addrep{hcs9-1}\hrepspace%
	\addrep{hcs9-2}\hrepspace%
	\hspace*{\fill}
	\\[\vrepspace]
	\addrep{hcs9-3}\hrepspace%
	\addrep{hcs9-3a}\hrepspace%
	\addrep{hcs9-3b}\hrepspace%
	\addrep{hcs9-3c}\hrepspace%
	\\[\vrepspace]
	{Rank 10:}\hfill
	\addrep{hcs10-1}\hrepspace%
	\addrep{hcs10-2}%
	\hspace*{\fill}
	\\[\vrepspace]
\caption{Representatives: Orientations of hyperbolic Coxeter simplex
diagrams}\label{reps:hcs}
\end{table}

\begin{table}[p]
	\addrep{da6-1}\hrepspace%
	\addrep{da6-2}\hrepspace%
	\addrep{da7-1}\hrepspace%
	\\[\vrepspace]
	\addrep{da8-1}\hrepspace%
	\addrep{da9-1}\hrepspace%
	\\[\vrepspace]
	\addrep{da10-1}%
	\\[\vrepspace]
	\caption{Representatives: Double arrow quivers}\label{reps:da}
\end{table}

\begin{table}[p]
	\addrep{exc7-1}\hrepspace%
	\addrep{exc8-1}\hrepspace%
	\\[\vrepspace]
	\addrep{exc9-1}\hrepspace%
	\addrep{exc10-1}\hrepspace%
	\\[\dvrepspace]
	\addrep{exc8-2}\hrepspace%
	\addrep{exc9-2}\hrepspace%
	\\[\vrepspace]
	\addrep{exc10-2}\hrepspace%
	\\[\dvrepspace]
	\addrep{exc9-3}\hrepspace%
	\addrep{exc10-3}\hrepspace%
	\\[\dvrepspace]
	\addrep{exc10-4}%
	\\[\vrepspace]
	\caption{Representatives: Exceptional quivers}\label{reps:exc}
\end{table}

\section{Classifying minimal mutation-infinite quivers}\label{sec:class}

Define an equivalence relation where two quivers are equivalent if one quiver
can be obtained from the other through a sequence of moves. Then these moves
together with the list of representatives (see Tables~\ref{reps:hcs},~\ref{reps:da}
and~\ref{reps:exc}) classify minimal mutation-infinite quivers.

Hyperbolic Coxeter simplex diagrams give a family of minimal mutation-infinite
quivers, and so orientations of these diagrams are some of the representatives
of the classes. In~\cite[Corollary 4]{CK-TriCatsToCA} Caldero and Keller proved
that any two acyclic orientations of a diagram, belonging to the same mutation
class, are mutation-equivalent through a sequence of sink-source mutations. As a
result of this, given any hyperbolic Coxeter simplex diagram the classification
requires a representative for each acyclic orientation which can not be obtained
from any other acyclic orientation by sink-source mutations.

Many minimal mutation-infinite quivers can be transformed into one of the
hyperbolic Coxeter diagrams, however there are some which can not. Therefore the
classification contains hyperbolic Coxeter classes and some exceptional classes.
A particular case of these exceptional cases arises from those minimal
mutation-infinite quivers which contain a double arrow between two vertices.
There are two such classes for quivers of size $6$ and one class for each size
between $7$ and $10$.

The result places a bound on the number of moves required to transform any
minimal mutation-infinite quiver to one of the class representatives. Diagrams
of the representatives can be found in Tables~\ref{reps:hcs},~\ref{reps:da}
and~\ref{reps:exc}. This statement can then be reversed to give a construction
of all possible minimal mutation-infinite quivers from these representatives.
The procedure to do this would be progressively applying the moves to the set of
all quivers computed so far. As the number of moves is bounded this procedure
will stop and at that point all minimal mutation-infinite quivers will have been
computed.

\begin{thm}\label{thm:smallcase}
	Any minimal mutation-infinite quiver with at most $9$ vertices can be
	transformed through sink-source mutations and at most 5 moves to one of an
	orientation of a hyperbolic Coxeter diagram, a double arrow quiver or an
	exceptional quiver (see Tables~\ref{reps:hcs}--\ref{reps:exc}).
\end{thm}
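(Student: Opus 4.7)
The plan is to prove the bound by explicit enumeration, structured inductively by the number of vertices $n$ from $3$ up to $9$, and then to verify the move-distance bound by backward search from the list of representatives.

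First I would reduce the search space before doing any enumeration. By Proposition~\ref{prop:3-mut-inf} and the discussion after it, a minimal mutation-infinite quiver has at most two arrows between any two vertices, and by Theorem~\ref{thm:mmi_size} it has at most $10$ vertices; combined with the fact that every proper subquiver is mutation-finite, hence classified by~\cite{FST-FiniteMutation}, this gives strong local restrictions on what the arrows around any vertex can look like. The base cases $n=3,4$ are handled directly: by Proposition~\ref{prop:3-vert-mmi} every $3$-vertex mutation-infinite quiver is minimal mutation-infinite and each such quiver has a double arrow, placing it in the double-arrow family; for $n=4$ one checks the (very short) list of mutation-infinite quivers whose every $3$-vertex subquiver is mutation-finite and matches them against Table~\ref{reps:hcs}, with any non-representative reachable from a representative by a single sink-source mutation or one move.

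For the inductive step, at rank $n \leq 9$, the strategy is to build the list $\mathcal{L}_n$ of all minimal mutation-infinite quivers by starting from the representatives of rank $n$ in Tables~\ref{reps:hcs}--\ref{reps:exc} and repeatedly applying (i) sink-source mutations (which preserve the minimal mutation-infinite property by Proposition~\ref{prop:ss-mut}) and (ii) moves from Appendix~\ref{sec:move_list} (each verified to preserve minimal mutation-infiniteness along the lines of the example proofs in Section~\ref{sec:moves}). Each representative is tagged with the move-depth at which it is first reached; the algorithm terminates once a full breadth-first round produces no new quiver. To check completeness, I would independently generate all candidate minimal mutation-infinite quivers of rank $n$: for each $Q$ enumerate its $(n-1)$-vertex subquivers, require each to lie in the known classification of mutation-finite quivers of~\cite{FST-FiniteMutation}, and then test whether $Q$ itself is mutation-infinite (using the finite mutation-finite criterion on the same reference, or by explicit mutation-class expansion with an early-abort). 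The theorem then reduces to the empirical claim that these two lists coincide and that the maximum move-depth observed is at most $5$.

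The main obstacle is the last step: controlling the move-depth. It is easy to generate the lists, but bounding the depth uniformly by $5$ requires that the move set in Appendix~\ref{sec:move_list} be rich enough that no mmi quiver lies ``far'' from a representative. I would attack this by, for each class, tracing the breadth-first tree and, whenever the frontier reaches depth close to $5$, inspecting the offending quivers to see whether an additional elementary move (still provably mmi-preserving by the template argument used for Figure~\ref{fig:movexample}) should be added to the appendix or whether a combination of a sink-source mutation followed by an existing move shortens the path. Because the rank is bounded by $9$, the number of isomorphism classes to examine is finite and manageable (the computational procedure in Appendix~\ref{sec:compute} handles exactly this bookkeeping), so in principle the bound $5$ is verifiable by inspection once the move list has stabilised.

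Finally, with both lists shown to agree and every quiver labelled with a move-depth $\leq 5$ to a representative, the statement follows: given any minimal mutation-infinite quiver $Q$ of rank $\leq 9$, locate it in $\mathcal{L}_n$, read off the recorded sequence of at most $5$ moves and intermediate sink-source mutations, and apply it to transform $Q$ into the corresponding representative from Tables~\ref{reps:hcs}--\ref{reps:exc}.
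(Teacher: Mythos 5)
Your proposal is correct and follows essentially the same route as the paper: the paper also proves this theorem by exhaustively computing all minimal mutation-infinite quivers (by extending the mutation-finite quivers of rank $n-1$ and testing minimality and mutation-infiniteness, as in Appendix~\ref{sec:compute}) and then verifying by breadth-first search over the moves of Appendix~\ref{sec:move_list} that every such quiver is at move-distance at most $5$ from a representative. Your additional cross-check of generating the list both forwards from the representatives and independently from below is a sensible redundancy, but it is the same computational argument.
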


As discussed in Section~\ref{sec:moves} above the moves required for quivers of
size 10 have more constraints and are more complicated than those for smaller
quivers. As such this result needs to be restated when considering these larger
quivers.

\begin{thm}\label{thm:main}
	Any minimal mutation-infinite quiver can be transformed through sink-source
	mutations and at most 10 moves to one of an orientation of a hyperbolic
	Coxeter diagram, a double arrow quiver or an exceptional quiver (see
	Tables~\ref{reps:hcs}--\ref{reps:exc}).
\end{thm}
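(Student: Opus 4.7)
The plan is to split on quiver size. By Theorem~\ref{thm:mmi_size} every minimal mutation-infinite quiver has at most $10$ vertices. For the case of at most $9$ vertices, Theorem~\ref{thm:smallcase} already supplies a bound of $5$ moves, which is within the $10$ move bound sought here, so no further work is needed in that range. The remaining task is therefore to handle the case of exactly $10$ vertices, and this step is essentially computational.

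First I would fix the representatives of size $10$ from Tables~\ref{reps:hcs}--\ref{reps:exc}. Starting from each representative, I apply the size-$10$ moves from Appendix~\ref{sec:move_list} in breadth-first fashion, recording each newly produced quiver up to sink-source equivalence and tracking the minimum number of moves needed to reach it. Because every image of a size-$10$ minimal mutation-infinite quiver under a valid move is again minimal mutation-infinite (by the same style of argument used for the move in Figure~\ref{fig:movexample}), and because the set of size-$10$ minimal mutation-infinite quivers is finite (every proper subquiver must be mutation-finite, and quivers in a given mutation-finite class of bounded size are finite in number by~\cite{FST-FiniteMutation}), the search terminates. The outcome to verify is that after at most $10$ moves from the representatives, every size-$10$ minimal mutation-infinite quiver has been reached.

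Completeness of this procedure requires an independent enumeration: generate all size-$10$ minimal mutation-infinite quivers directly, for example by extending each size-$9$ mutation-finite quiver by one vertex in every possible arrow configuration and keeping those which are themselves mutation-infinite while every proper subquiver remains mutation-finite. Comparing this independent list against the list of quivers reached from the representatives by move sequences certifies that the set of size-$10$ moves is complete.

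The main obstacle will be verifying the extra mutation-finiteness constraints attached to the size-$10$ moves, such as the one illustrated in Figure~\ref{fig:10move}. Each such constraint demands that an auxiliary quiver of size strictly less than $10$ be mutation-finite; here one uses the classification of~\cite{FST-FiniteMutation} together with the already-established Theorem~\ref{thm:smallcase} for the strictly smaller sizes, so that the check is decidable at every node of the breadth-first search. The second subtle point is tightness of the numerical bound: showing $10$ moves always suffice is a diameter estimate on a move graph that is, a priori, very large, and I expect it to be certified by the exhaustive computation described in Appendix~\ref{sec:compute} rather than by a closed-form argument.
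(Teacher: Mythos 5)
Your proposal is correct and matches the paper's own argument: the result is established by the exhaustive computation described in Appendix~\ref{sec:compute} — enumerate all minimal mutation-infinite quivers by extending mutation-finite quivers by one vertex, then determine by breadth-first search on the move graph the minimal number of moves from each quiver to a representative, certifying the bound of $10$. The only (immaterial) difference is the direction of the search: the paper applies moves starting from each minimal mutation-infinite quiver and records the distance to its representative, whereas you propose searching outward from the representatives; both rest on the same independent enumeration for completeness.
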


Appendix~\ref{sec:compute} discusses the computations used to verify this result
and find all minimal mutation-infinite quivers. There are in total 18,799 such
quivers (excluding those with 3 vertices) which are orientations of 574
different graphs. Pictures of all minimal mutation-infinite quivers organised
into their move-classes can be found on the author's website~\cite{Lawson-MMI}.

\subsection{A mutation-infinite check using minimal mutation-infinite quivers}
Any mutation-infinite quiver must contain some minimal mutation-infinite quiver
as a subquiver. Hence given a list of all the minimal mutation-infinite
quivers there is an algorithm to check whether a given quiver is
mutation-infinite without having to compute any part of its mutation class.

Let $Q$ be a possibly mutation-infinite quiver and $\lbrace P_i \rbrace_{i\in %
I}$ be all minimal mutation-infinite quivers indexed by $I$. For each $i
\in I$ if $P_i$ is a subquiver of $Q$ then $Q$ is mutation-infinite,
otherwise continue to the next $i$. If no minimal mutation-infinite quiver is in
fact a subquiver of $Q$ then $Q$ is mutation-finite.

\bibliographystyle{plain}
\bibliography{main}
\clearpage
\appendix
\section{Computing minimal mutation-infinite quivers}\label{sec:compute}
The quiver classification involved a large computational effort to find all
minimal mutation-infinite quivers. This section details the procedures used in
this computation. Details about implementations of these procedures can be found
on the author's website~\cite{Lawson-MMI}.

\subsection{Finding the size of a mutation-class}

An important computation in all the following algorithms is determining whether
a given quiver is mutation-finite or mutation-infinite. There is a fast
approximation which can prove a quiver is mutation-infinite and a slower
procedure which proves a quiver is mutation-finite.

\subsubsection*{Fast approximation to check whether a quiver is mutation-infinite}

Proposition~\ref{prop:3-mut-inf} states that any mutation-finite quiver has at
most 2 arrows between any two vertices. This gives a procedure that can prove
that a quiver is mutation-infinite, but which cannot prove that a quiver is
mutation-finite.  This procedure was used in computations by Felikson, Shapiro
and Tumarkin in their classification of skew-symmetric mutation-finite
quivers~\cite{FST-FiniteMutation} and Shapiro's comments on the procedure can be
found on his website~\cite{Shapiro-Programs}.

The procedure, given in Algorithm~\ref{algo:fast}, checks whether the quiver
contains 3 or more arrows between any two vertices, if it does then the quiver
is mutation-infinite.  Otherwise, pick a vertex at random and mutate the quiver
at this vertex and repeat with this new quiver.

\begin{algorithm}[htbp]
	\DontPrintSemicolon
	\KwIn{$Q$ Quiver to check}
	\KwData{$M$ Number of mutations to perform}
	\KwData{$k$ Counter initially 0}
	\KwResult{Whether $Q$ is mutation-infinite, or probably mutation-finite}

	\BlankLine
	\While{$k < M$}{%
		\If{$Q$ contains 3 or more arrows between 2 vertices}{%
			\KwRet{$Q$ is mutation-infinite}
		}
		Choose a random vertex\;
		Mutate $Q$ at this vertex\;
		Increment $k$\;
	}
	\KwRet{$Q$ is probably mutation-finite}
	\caption{Fast approximation whether a quiver is mutation-infinite}
	\label{algo:fast}
\end{algorithm}

For mutation-finite quivers this process would never terminate without the bound
on the number of mutations, and it is possible that for mutation-infinite
quivers the randomly chosen mutations never generate an edge with more than 2
arrows. Therefore this is only an approximation and a maximum number of mutations
should be attempted before stopping.  If no quiver was found with more than 2
arrows between two vertices then, provided the number of mutations was high, the
quiver is probably mutation-finite.

\subsubsection*{Computing a full finite mutation-class}

While the above procedure can show a quiver is probably mutation-finite, we
require a procedure that can definitively prove it. To do this the whole
mutation-class of the quiver must be found.

\begin{algorithm}[htbp]
	\DontPrintSemicolon
	\KwIn{$Q$ Quiver}
	\KwData{$L$ Queue of quivers to mutate}
	\KwData{$A$ List of all quivers found in the mutation class so far}
	\KwData{$M_P$ For each quiver $P$, a map taking a vertex in $P$ to the quiver
	obtained by mutating $P$ at that vertex (if the mutation has been computed)}
	\KwResult{$A$ List of all quivers in the mutation class}
	\BlankLine
	Add $Q$ to $L$\;
	\While{$L$ is not empty}{%
		Remove quiver $P$ from the top of queue $L$\;
		\For{$i = 1$ \KwTo (Number of vertices)}{%
			\eIf{$M_P$ has a quiver at vertex $i$}{%
				Continue to next vertex\;
			}{%
				Let $P'$ be the mutation of $P$ at $i$\;
				\eIf{$P' \in A$}{%
					Update $M_{P'}$ so vertex $i$ points to $P$\;
				}{%
					Create $M_{P'}$ with vertex $i$ pointing to $P$\;
					Add $P'$ to $A$\;
					Add $P'$ to $L$\;
				}
				Update $M_P$ so vertex $i$ points to $P'$\;
			}
		}
	}
	\KwRet{$A$}
	\caption{Compute mutation-class of a mutation-finite quiver}
	\label{algo:mut-class}
\end{algorithm}

The algorithm to find the mutation-class of a mutation-finite quiver calculates
the whole exchange graph of the initial quiver. First compute all mutations of
this quiver, then for each of these quivers compute all mutations and continue
until no further quivers are computed.  By keeping track of which mutations link
two vertices, only those mutations which are not known need to be computed.  See
Algorithm~\ref{algo:mut-class}.

\subsubsection*{Slower mutation-finite check}

The above algorithm will only terminate if the initial quiver is mutation-finite. In
the case of a mutation-infinite quiver, the mutation-class is infinite, so the
computation will continue indefinitely. The algorithm can be adapted to
terminate for mutation-infinite quivers using the result in
Proposition~\ref{prop:3-mut-inf}.

Once a new quiver is computed which has not yet been found, check whether it
contains three or more arrows between any two vertices. If it does then the
mutation-class is known to be infinite, so the procedure can be terminated. See
Algorithm~\ref{algo:slow-inf}.

\begin{algorithm}[htbp]
	\DontPrintSemicolon
	\KwIn{$Q$ Quiver}
	\KwData{$L$ Queue of quivers to mutate}
	\KwData{$A$ List of all quivers found in the mutation class so far}
	\KwData{$M_P$ For each quiver $P$, a map taking a vertex in $P$ to the quiver
	obtained by mutating $P$ at that vertex (if the mutation has been computed)}
	\KwResult{Whether $Q$ is mutation-infinite or not}
	\BlankLine
	Add $Q$ to $L$\;
	\While{$L$ is not empty}{%
		Remove quiver $P$ from the top of queue $L$\;
		\For{$i = 1$ \KwTo (Number of vertices)}{%
			\eIf{$M_P$ has a quiver at vertex $i$}{%
				Continue to next vertex\;
			}{%
				Let $P'$ be the mutation of $P$ at $i$\;
				\eIf{$P' \in A$}{%
					Update $M_{P'}$ so vertex $i$ points to $P$\;
				}{%
					\If{$P'$ has more than 3 arrows between 2 vertices}{%
						\KwRet{$Q$ is mutation-infinite}
					}
					Create $M_{P'}$ with vertex $i$ pointing to $P$\;
					Add $P'$ to $A$\;
					Add $P'$ to $L$\;
				}
				Update $M_P$ so vertex $i$ points to $P'$\;
			}
		}
	}
	\KwRet{$Q$ is mutation-finite}
	\caption{Determine whether a quiver is mutation-infinite or not}
	\label{algo:slow-inf}
\end{algorithm}

There are only a finite number of ways to draw a graphs with a fixed number
of vertices and up to 2 arrows between any two vertices. Hence in an infinite
mutation-class there will eventually be a quiver with 3 or more arrows between
two vertices and therefore the procedure will always terminate.

The two procedures to compute whether a quiver is mutation-finite can be
combined to provide a faster run time in the majority of cases. By first using
the fast approximation, most mutation-infinite quivers will be identified as
mutation-infinite and any quivers which are not then get passed to the slower
check to confirm whether they are mutation-finite.

\subsection{Computing quivers}

The above algorithms give procedures to tell whether a given quiver is
mutation-finite or mutation-infinite. By iterating through a range of quivers these
checks can be used to find all quivers which satisfy certain properties.

\subsubsection*{Computing all mutation-finite quivers}

Proposition~\ref{prop:subquivers} states that all subquivers of a
mutation-finite quiver are again mutation-finite. This fact is used to build up
mutation-finite quivers of a certain size $n$ by adding vertices to the
mutation-finite quivers with $n-1$ vertices. All $2$ vertex quivers are
mutation-finite, so with these as a starting point we can recursively
compute all mutation-finite quivers of size $n$, using the procedure in
Algorithm~\ref{algo:finite}.

By Proposition~\ref{prop:3-mut-inf} any mutation-finite quiver contains at most
2 arrows between any two vertices, so when adding a vertex to the quivers of
size $n-1$ it suffices to only add either $0$, $1$ or $2$ between the new vertex
and any others. Adding more arrows would immediately yield a mutation-infinite
quiver.

\SetKwFunction{FnFinite}{Finite}
\SetKwProg{Fn}{Function}{}{}
\begin{algorithm}[htbp]
	\DontPrintSemicolon
	\KwIn{$n$ Size of quiver to output}
	\KwData{$A$ List of mutation-finite quivers}
	\KwResult{A list of all mutation-finite quivers of size $n$}
	\BlankLine
	\Fn{\FnFinite{size $n$}}{%
		\If{$n = 2$}{%
			Let $A = \left\lbrace \cdot\rightarrow\cdot, \cdot\rightrightarrows\cdot
			\right\rbrace$\;
			\KwRet{$A$}
		}
		\ForEach{Quiver $Q$ in \FnFinite{$n-1$}}{%
			\ForEach{Extension of $Q$ to possibly mutation-finite quiver $Q'$}{%
				\If{$Q'$ is mutation-finite}{%
					Add $Q'$ to $A$\;
				}
			}
		}
		\KwRet{$A$}
	}
	\caption{Compute all mutation-finite quivers of size $n$}
	\label{algo:finite}
\end{algorithm}

\subsubsection*{Computing all minimal mutation-infinite quivers}

Any subquiver of a minimal mutation-infinite quiver is a mutation-finite quiver.
Therefore to construct these quivers of a certain size $n$ start with all
mutation-finite quivers of size $n-1$ and extend the quiver by adding another
vertex in all possible ways with either $0$, $1$ or $2$ arrows between the new
vertex and any other vertices. The quivers obtained in this way could then be minimal
mutation-infinite and so this needs to be verified.

For a given quiver to be minimal mutation-infinite it must satisfy two
conditions, namely that it is mutation-infinite and that every subquiver is
mutation-finite. Both of these conditions can be checked using the above
procedures.

\subsection{Checking number of moves}

Theorem~\ref{thm:main} states the maximum number of moves required to transform
any minimal mutation-infinite quiver to one of the class representatives. To
compute this number each minimal mutation-infinite quiver is checked in turn to
find the minimal number of moves needed to transform that quiver to its class
representative.

This minimal number of moves can be found by applying all applicable moves to the
initial quiver and storing the number of moves taken to reach each quiver obtained
in this way. 
We can ensure that the number of moves used to obtain a class representative is
minimal by always choosing the next quiver used in the process to be the one
obtained through the fewest number of moves.

\newenvironment{nospacecenter}{%
\centering%
}{%
\par%
}
\section{List of moves}\label{sec:move_list}
This section lists all moves required to transform any minimal mutation-infinite
quiver to one of the representatives. Any listed move should also be considered
along with the move where all arrows are reversed.

Where a move has the requirement that one of the components is a line this
requires that the component is a line with one of its endpoints adjacent to the
move subquiver.

\subsection{Moves for quivers of size 5}\hspace*{\fill}~\\
\begin{nospacecenter}
	\includegraphics{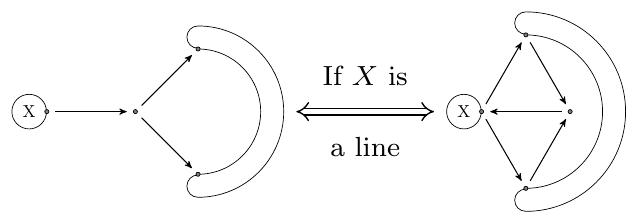}
\end{nospacecenter}

\subsection{Additional moves for quivers of size 6}\hspace*{\fill}~\\
\begin{multicols}{2}
\begin{nospacecenter}
	\includegraphics{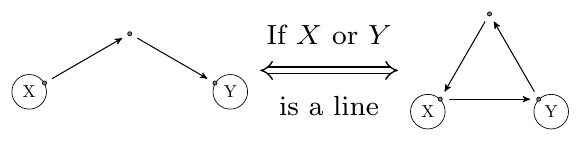}\\
	\includegraphics{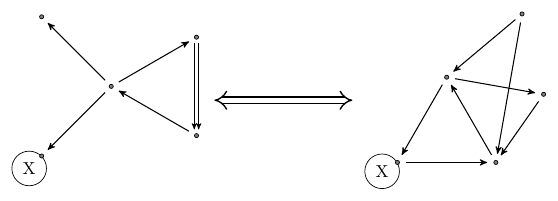}\\
\end{nospacecenter}
\end{multicols}

\subsection{Additional moves for quivers of size 7}\hspace*{\fill}~\\
\begin{multicols}{2}
\begin{nospacecenter}
	\includegraphics{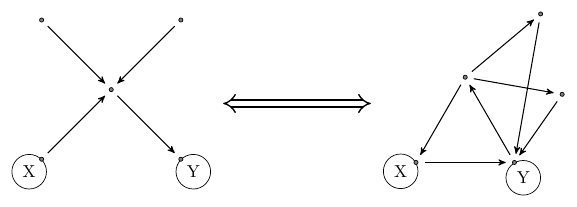}\\
	\includegraphics{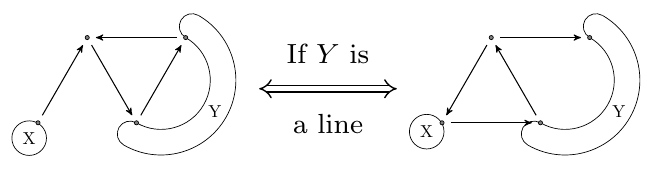}\\
	\includegraphics{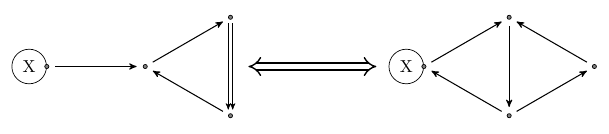}\\
	\includegraphics{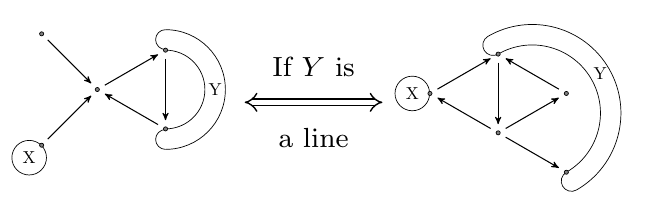}\\
\end{nospacecenter}
\end{multicols}

\subsection{Additional moves for quivers of size 8}\hspace*{\fill}~\\
\begin{multicols}{2}
\begin{nospacecenter}
	\includegraphics{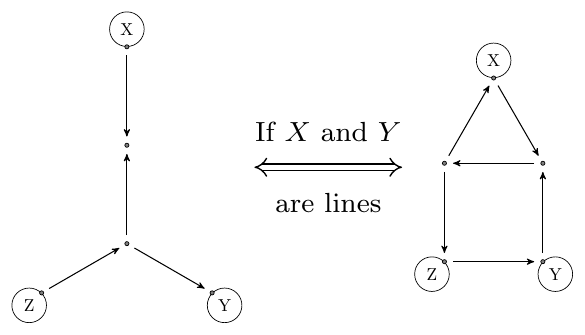}\\
	\includegraphics{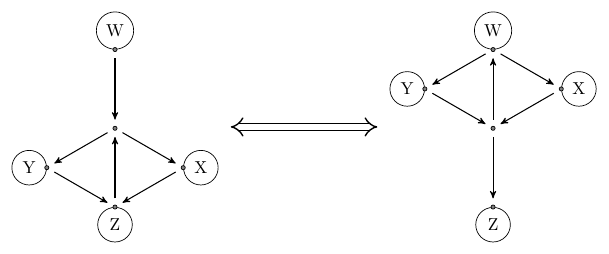}\\
	\includegraphics{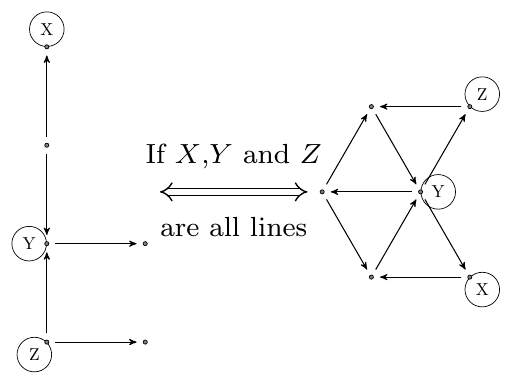}\\
	\includegraphics{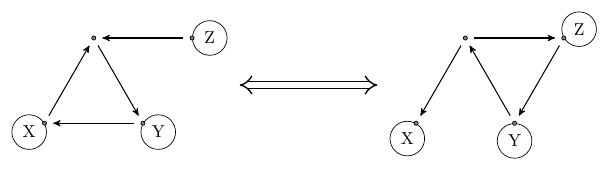}\\
\end{nospacecenter}
\end{multicols}

\subsection{Additional moves for quivers of size 9}\hspace*{\fill}~\\
\begin{multicols}{2}
\begin{nospacecenter}
	\includegraphics{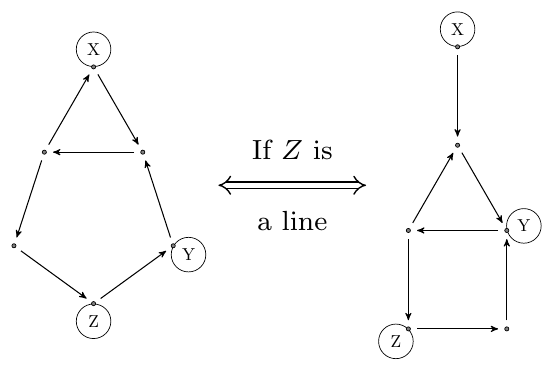}\\
	\includegraphics{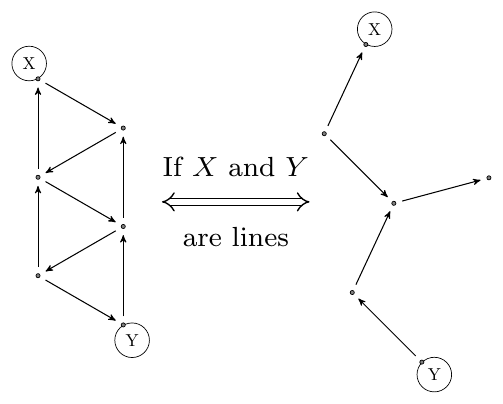}\\
	\includegraphics{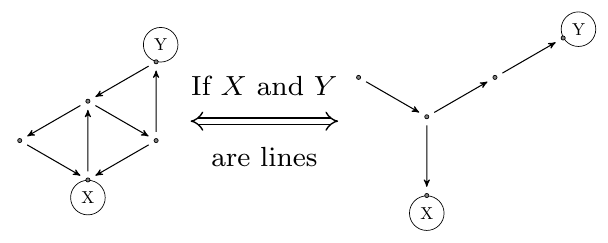}\\
	\includegraphics{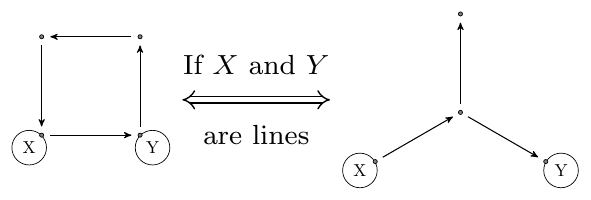}\\
	\includegraphics{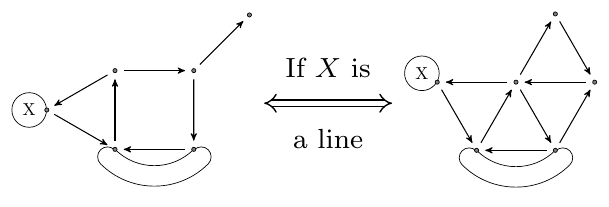}\\
	\includegraphics{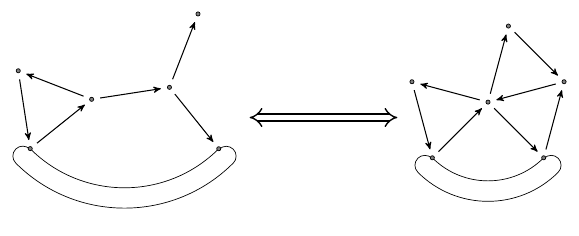}\\
	\includegraphics{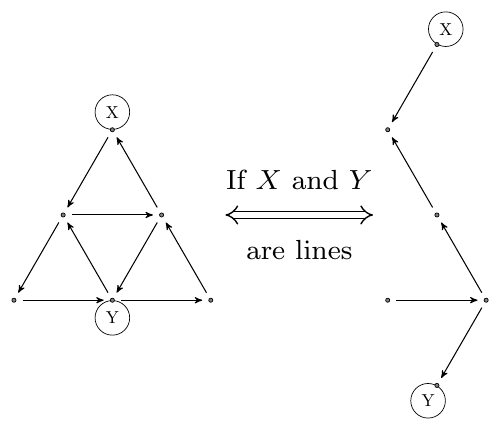}\\
	\includegraphics{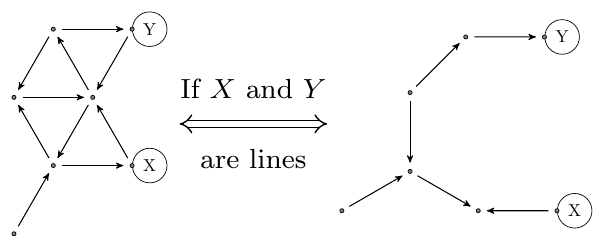}\\
	\includegraphics{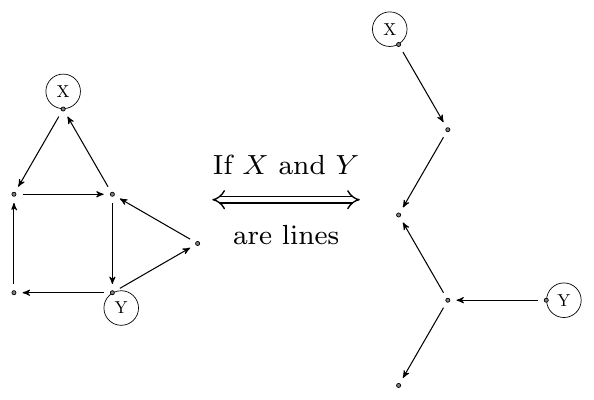}\\
	\includegraphics{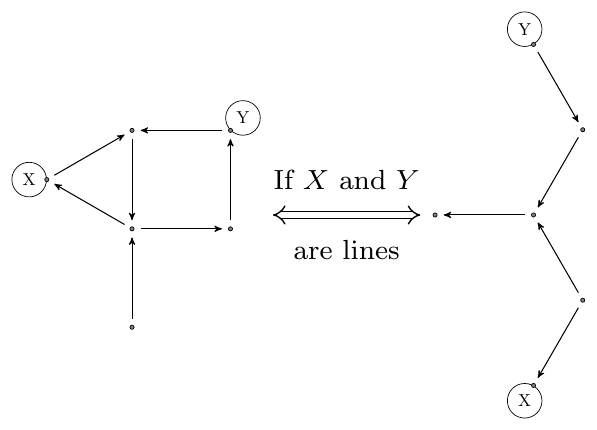}\\
	\includegraphics{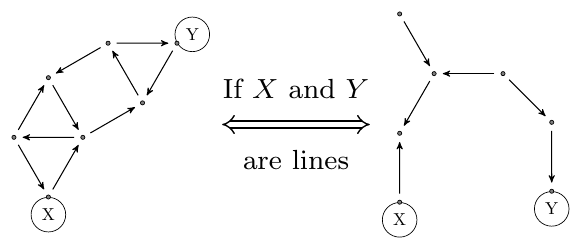}\\
	\includegraphics{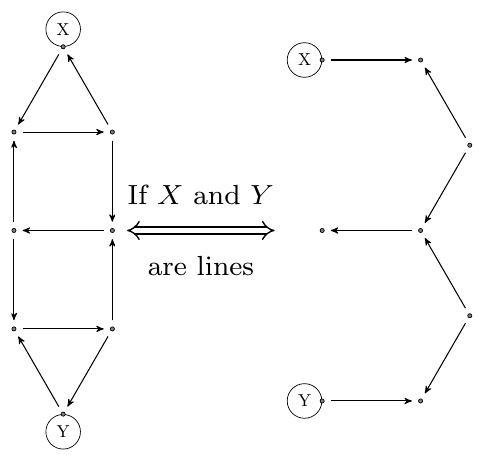}\\
\end{nospacecenter}
\end{multicols}

\subsection{Additional moves for quivers of size 10}\hspace*{\fill}~\\
\begin{multicols}{2}
\begin{nospacecenter}
	\includegraphics{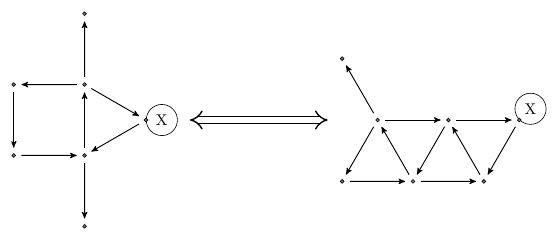}\\
	\includegraphics{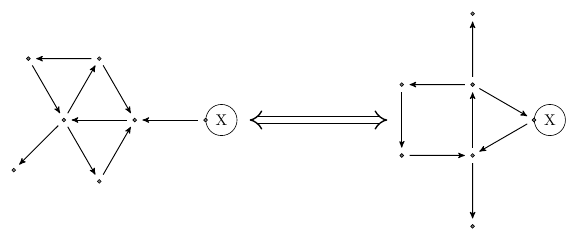}\\
	\includegraphics{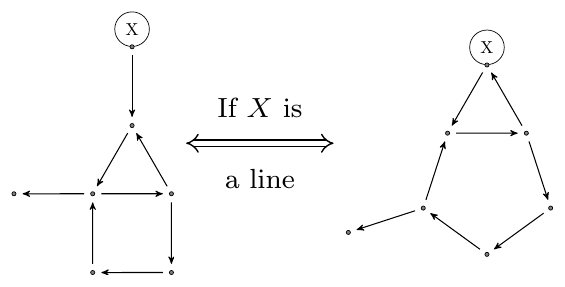}\\
	\includegraphics{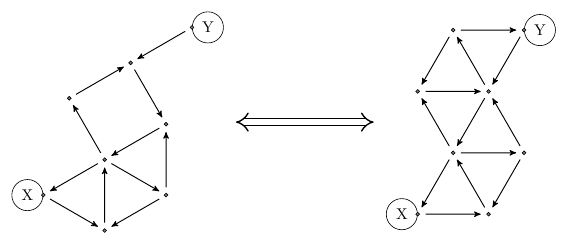}\\
	\includegraphics{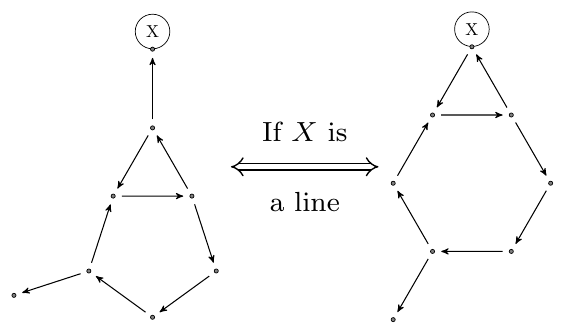}\\
	\includegraphics{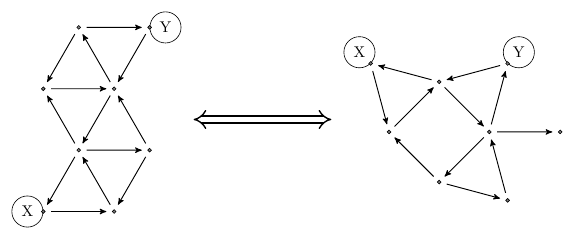}\\
\end{nospacecenter}%
\end{multicols}%
\begin{nospacecenter}
	\includegraphics{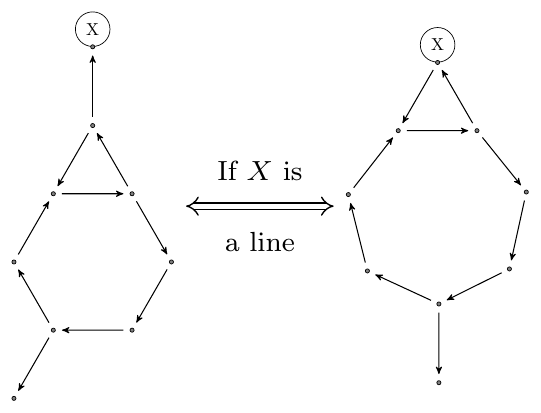}\quad
	\includegraphics{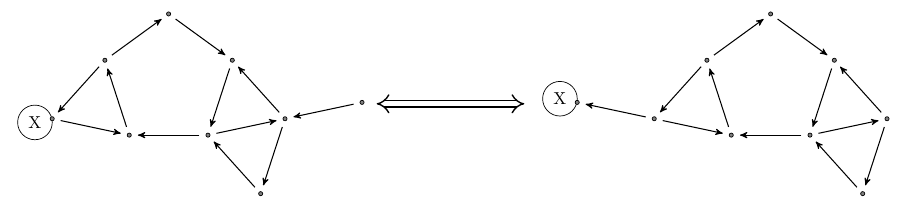}\quad
	\includegraphics{10mov1}\quad
	\includegraphics{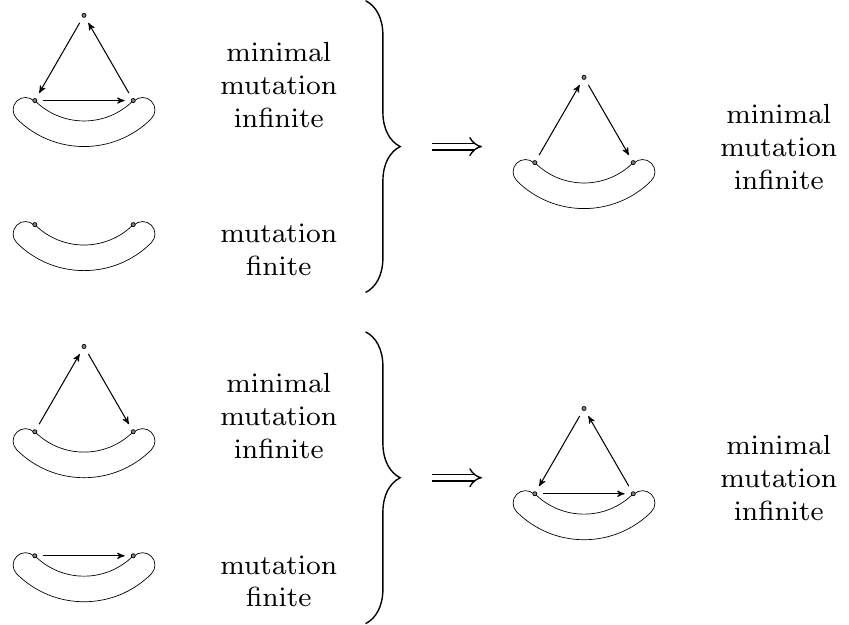}\quad
	\includegraphics{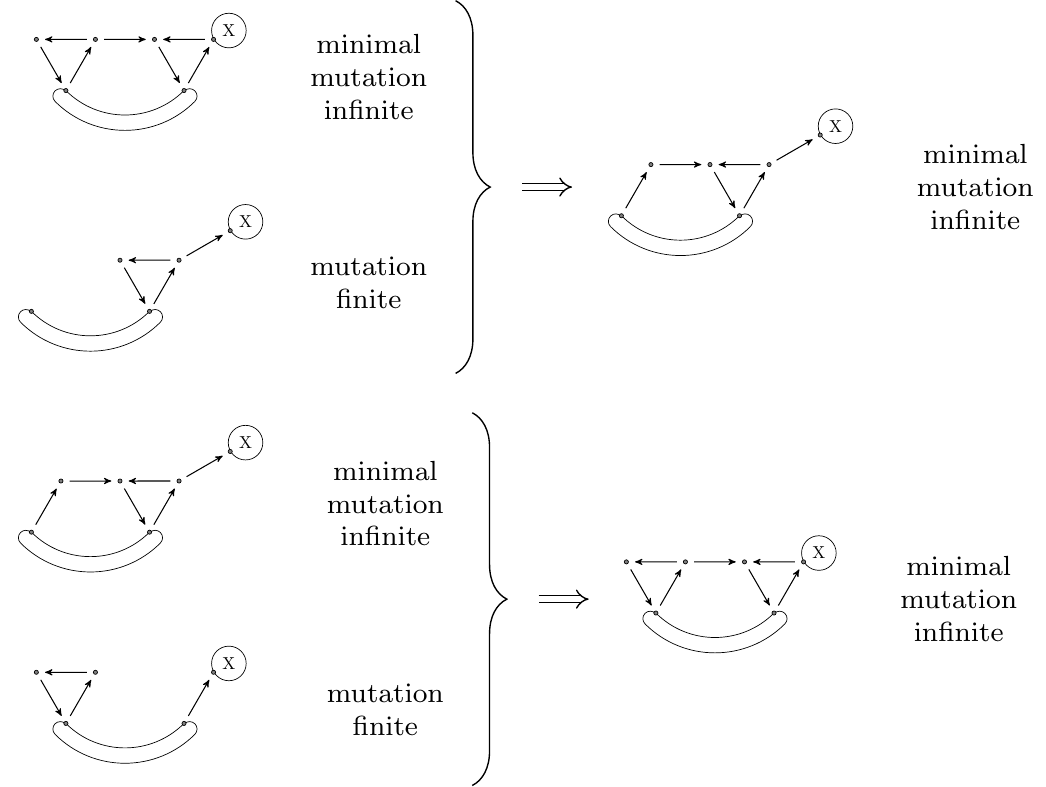}\quad
	\includegraphics{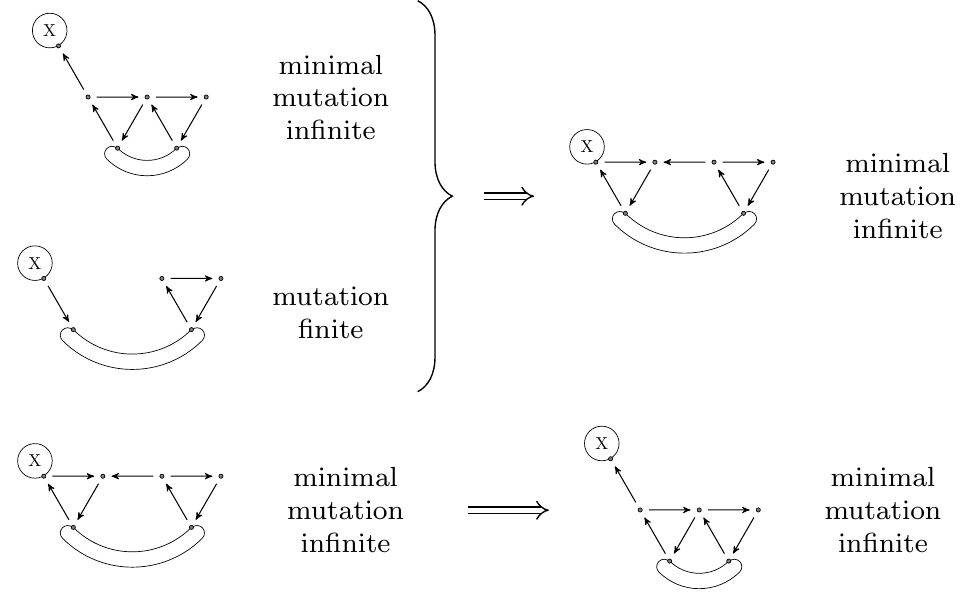}\quad
	\includegraphics{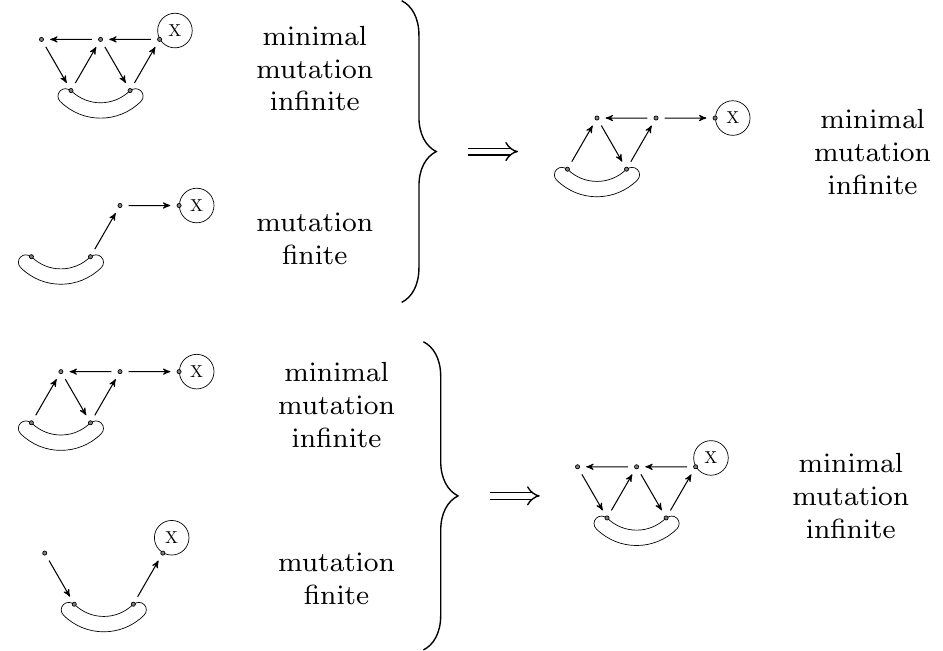}\quad
	\includegraphics{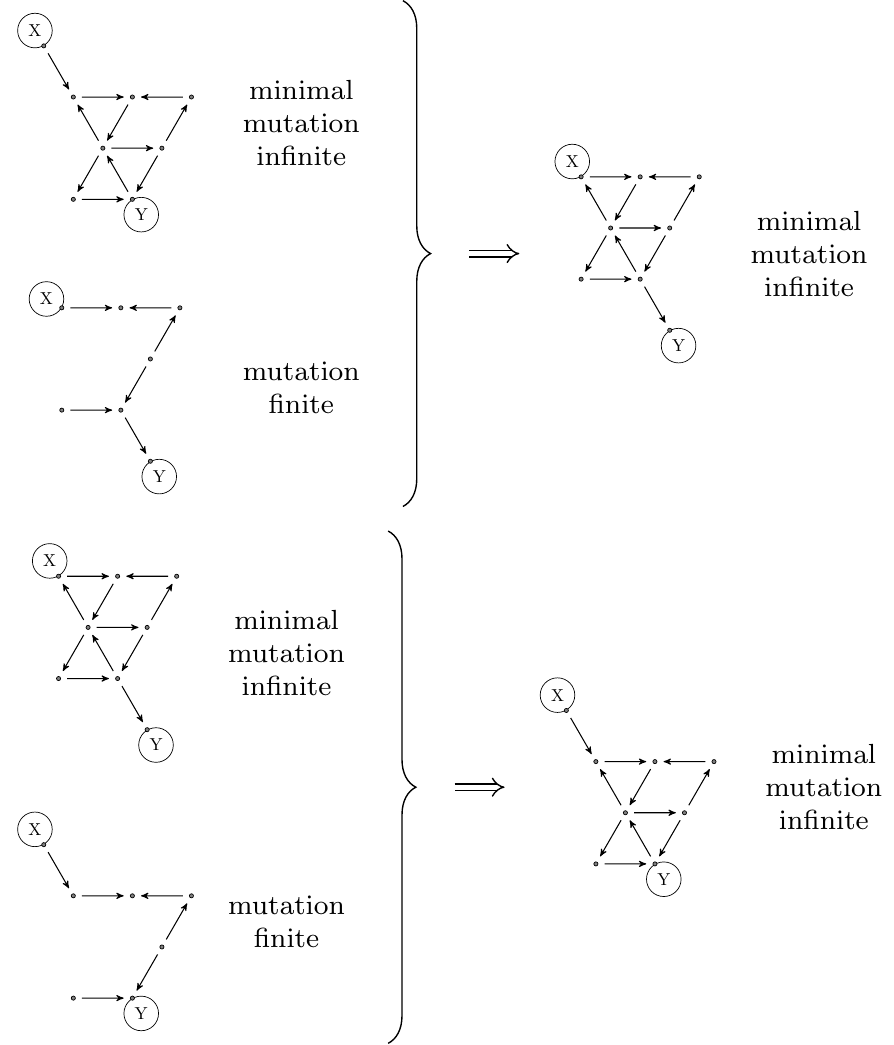}\quad
	\includegraphics{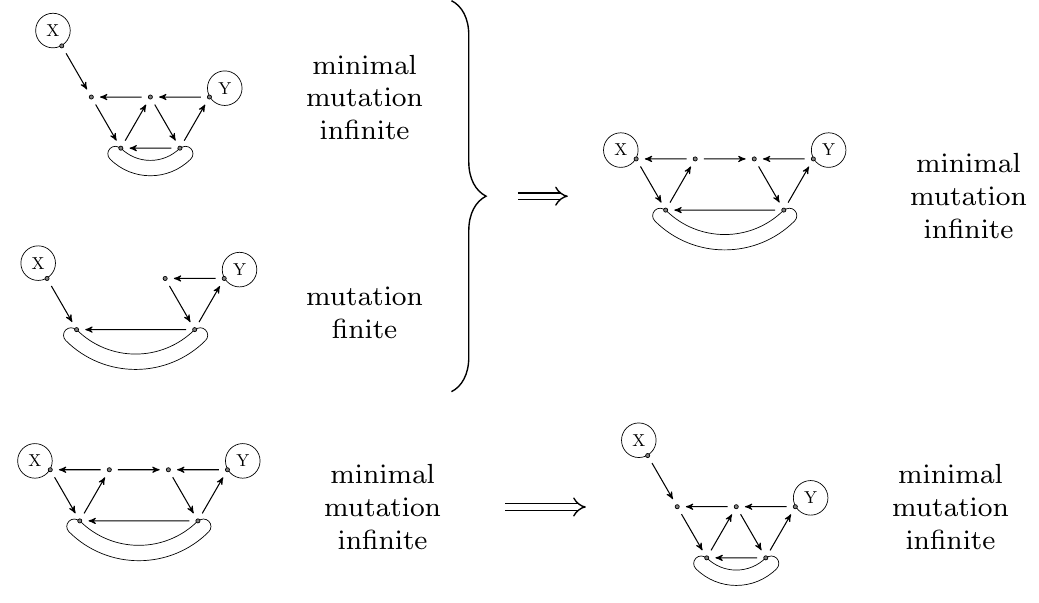}\quad
	\includegraphics{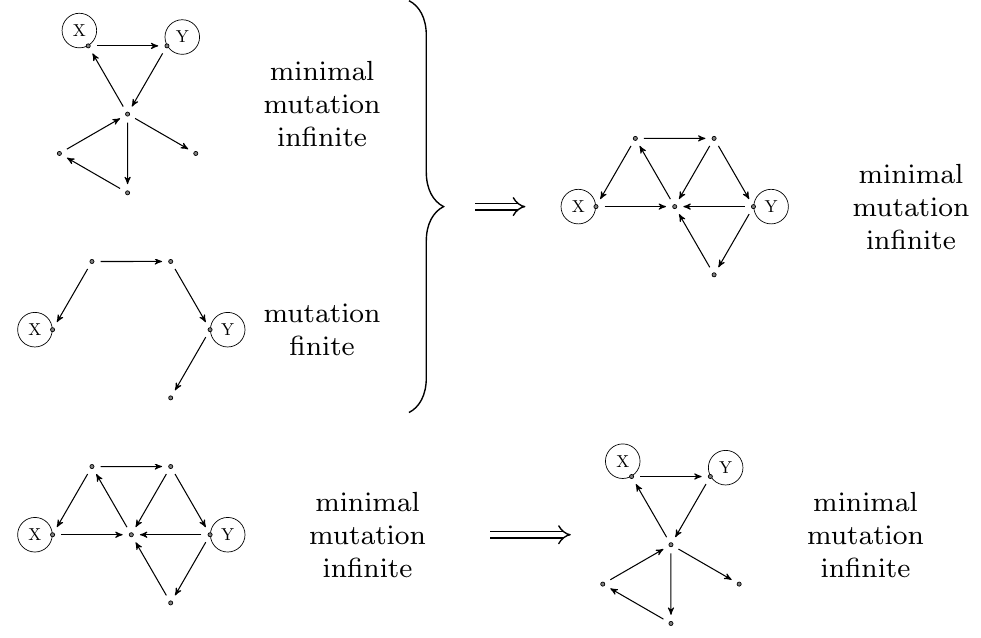}\quad
	\includegraphics{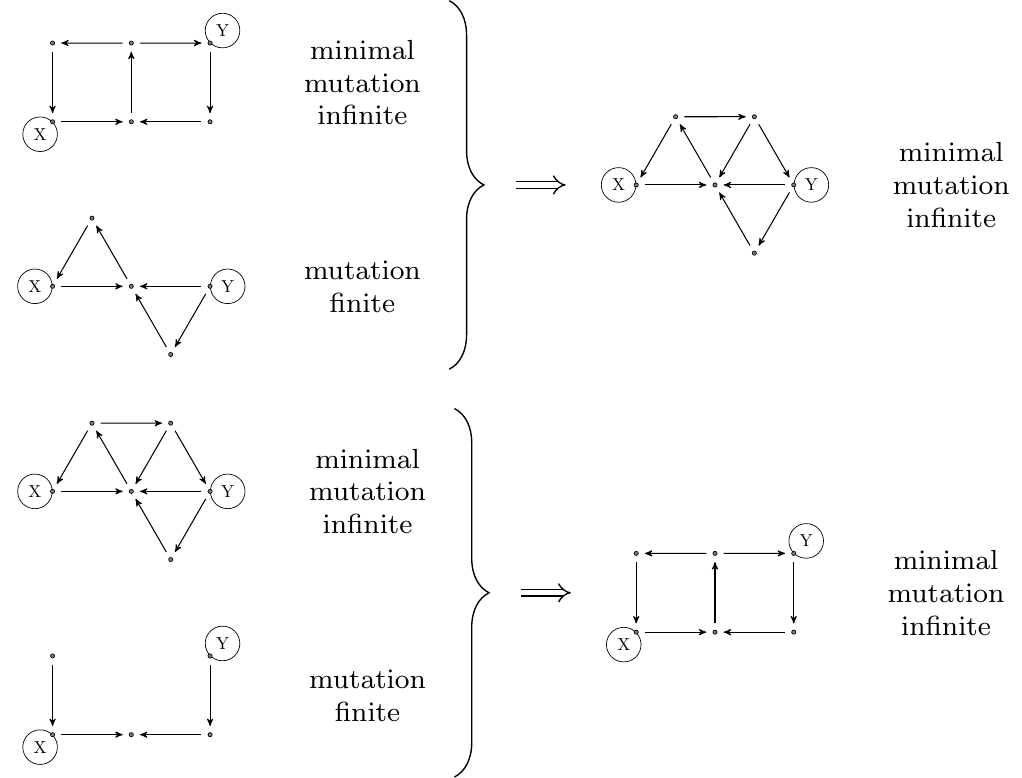}\quad
\end{nospacecenter}

\affiliationone{Department of Mathematical Sciences\\
	Durham University\\
	South Road\\
	Durham, UK\\
	DH1 3LE\\
\email{j.w.lawson@durham.ac.uk}}
\end{document}